\newcommand{\R}{{\mathbf R}}
\newcommand{\ve}{\varepsilon}
\newcommand{\jb}[1]{\left\langle #1 \right\rangle}
\newtheorem{thm}{Theorem}
\newtheorem{lm}{Lemma}
\newtheorem{cor}[lm]{Corollary}
\newtheorem{prop}[lm]{Proposition}
\newtheorem{rem}[lm]{Remark}
\numberwithin{equation}{section}
\numberwithin{thm}{section}
\numberwithin{lm}{section}
\title [Critical exponent for nonlinear damped wave equations]
{Critical exponent for nonlinear damped wave equations with non-negative potential in 3D}
\author{Vladimir Georgiev}
\address{V. Georgiev
\newline Dipartimento di Matematica Universit\`a di Pisa
Largo B. Pontecorvo 5, 56100 Pisa, Italy\\
 and \\
 Faculty of Science and Engineering \\ Waseda University \\
 3-4-1, Okubo, Shinjuku-ku, Tokyo 169-8555 \\
Japan and IMI--BAS, Acad.
Georgi Bonchev Str., Block 8, 1113 Sofia, Bulgaria}%
\email{georgiev@dm.unipi.it}%
\thanks{ The first author was supported in part by  Project 2017 "Problemi stazionari e di evoluzione nelle equazioni di campo nonlineari" of INDAM,
GNAMPA - Gruppo Nazionale per l'Analisi Matematica,
la Probabilit\`a e le loro Applicazioni,
by Institute of Mathematics and Informatics,
Bulgarian Academy of Sciences and Top Global University Project, Waseda University,  by the University of Pisa, Project PRA 2018 49 and project "Dinamica di equazioni nonlineari dispersive", "Fondazione di Sardegna" , 2016.}
\author{Hideo Kubo}
\address{H. Kubo \newline
Department of Mathematics,
Faculty of Science, Hokkaido University,
Sapporo 060-0810, Japan}
\email{kubo@math.sci.hokudai.ac.jp}
\thanks{The second author was
partially supported by Grant-in-Aid for Science Research (No.16H06339 and No. 26220702),
JSPS}
\author{Kyouhei Wakasa}
\address{K.Wakasa \newline
Department of Mathematics, Faculty of Science and Technology, 
Tokyo University of Science, 2641 Yamazaki, Noda, Chiba 278-8510, Japan.}
\email{wakasa\_kyouhei@ma.noda.tus.ac.jp}
\thanks{The third author was partially supported by Grant-in-Aid for Scientific Research 
(No.18H01132), JSPS}
\begin{document}

\begin{abstract}
We are studying possible interaction of damping coefficients in the subprincipal part of the linear 3D wave equation and their impact on the critical exponent of the corresponding nonlinear Cauchy problem with small initial data. The main new phenomena is that certain relation between these coefficients may cause very strong jump of the critical Strauss exponent in 3D to the critical 5D Strauss exponent for the wave equation without damping coefficients.
\end{abstract}
\maketitle

\section{Introduction}
We consider the Cauchy problem for the nonlinear damped wave equation with a non-negative potential:
\begin{align} \label{1.1}
& (\partial_t^2 + 2w(|x|) \partial_t -\Delta +V(|x|))U= \lambda |U|^p
\quad \mbox{in} \ (0,T) \times {\mathbb R}^3,
\\ \label{1.2}
& U(0,x)=\varepsilon f_0(|x|), \quad (\partial_t U)(0,x)=\varepsilon f_1(|x|)
\quad \mbox{for} \ x \in {\mathbb R}^3,
\end{align}
where $f_0$, $f_1$, $w$, and $V$ are assumed to be radially symmetric functions
in ${\mathbb R}^3$.

The case without any damping term, i.e. the case when $w=V=0,$ has been intensively studied  for  few decades (see \cite{Str89}, \cite{Joh79}, \cite{G81}, \cite{GLS97}, \cite{DGK01}, \cite{YZ06}, \cite{KubOht05}, or references in \cite{G2005}) and in this case there is a critical nonlinear exponent
 known as Strauss critical exponent that separates global small data solutions and blow - up of the small data solution for finite time.
  This critical exponent $p_0(n)$ is the positive root of
$$
p \left(\frac{n-1}2 p- \frac{n+1}2 \right)=1.
$$

In case of presence the damping terms with  $w(r)=c_1/r,$ $V(r) = c_2/r^2$, where  $c_1,c_2>0$, one can  pose the question if
the interaction between damping terms and nonlineare source term  can produce appropriate shift of the Strauss exponent.

For this, we suppose that $w(r)$ is a positive decreasing function in
$C([0,\infty)) \cap C^1(0,\infty)$ satisfying
\begin{align} \label{1.3}
w(r)=\frac1{r} \quad \mbox{for} \ r \ge r_0
\end{align}
with some positive number $r_0$.  

First, we recall known results concerning the linear damped wave equations, i.e.,
$V\equiv 0$ and $\lambda=0$.
It was shown in Mochizuki \cite{Mochi76} that if
$0<w(|x|) \le C(1+|x|)^{-\alpha}$ for $x \in {\mathbb R}^3$ and
$\alpha>1$, then the scattering to the free wave equation occurs in the energy space,
without assuming the radial symmetry.
On the other hand, if
$w(x) \ge C(1+|x|)^{-\alpha}$ for $x \in {\mathbb R}^3$ and $\alpha \le 1$,
then we see from the work of Matsumura \cite{Matsu77} that the energy for the wave equation decays to zero as time goes to infinity.
In this paper we focus on the borderline case $\alpha=1$.

For the semilinear wave equation with potential
$$ (\partial_t^2  -\Delta +V(x))u= \lambda |u|^p
\quad \mbox{in} \ (0,T) \times {\mathbb R}^3,$$
one can find blow up result in \cite{ST97} or global existence part in  \cite{GHK01}.

In the case where the coefficient of the damping term is a function of  time variable,
D'Abbicco, Lucente and Reissig \cite{DAbLucRei15} derived the critical exponent for the Cauchy problem to
\begin{align} \label{eq.t1}
\left( \partial_t^2 + \frac2{1+t} \partial_t -\Delta \right)U= \lambda |U|^p
\quad \mbox{in} \ (0,T) \times {\mathbb R}^3,
\end{align}
by assuming the radial symmetry. Indeed, they proved that the problem admits a global solution for sufficiently small initial data if $p>p_0(5)$,
and that the solution blows up in  finite time if $1<p<p_0(5)$.

This result can be interpreted as a result of the action of the damped term in \eqref{eq.t1} that shifts the critical exponent for small data solutions from $p_0(3)$ to $p_0(5).$

The assumption about the radial symmetry posed in \cite{DAbLucRei15} was removed by Ikeda, Sobajima \cite{IkeSob17time} for the blow-up part
(actually, they treated more general damping term ${\mu} { (1+t)^{-1} } \partial_t u$ with $\mu>0$), and
by Kato, Sakuraba \cite{KatSak18} and Lai \cite{Lai18}
for the existence part,  independently.

Now we turn back to the case where the coefficient of the damping term is a function of spatial variabletime variavles.
Ikeda, Sobajima \cite{IkeSob17space} considered the Cauchy problem for
\begin{align} \label{eq.x1}
\left( \partial_t^2 + \frac{V_0}{|x|} \partial_t -\Delta \right)U= \lambda |U|^p
\quad \mbox{in} \ (0,T) \times {\mathbb R}^n,
\end{align}
and proved a blow-up result together with the upper bound of the lifespan, provided that
$0<V_0<(n-1)^2/(n+1)$, $n/(n-1) <p \le p_0(n+V_0)$, and that $p<(n-2)/(n-4)$ if $n \ge 5$.

We shall study the combined effect between the damping and potential terms in this paper, provied the following relation
\begin{align} \label{1.5}
V(r)=-w^\prime(r) + w(r)^2 \quad \mbox{for} \ r>0.
\end{align}
Since we assumed that $w$ is a decreasing function, we see that $V$ is a non-negative function.
Roughly speaking, our result is similar to \cite{DAbLucRei15} in the sense that the critical exponent is shifted
from $p_0(3)$ to $p_0(5)$.

Another important phenomena, closely related to the space shift of the critical exponent, is the behavior of the supercritical solution near the light cone. Indeed, for $ p>p_0(5)$ we shall see that the far field behavior of $U(t,r)$  is given by
$$ U(t,r) \lesssim \frac{1}{t^2} , \ \ r \in (t/2,t) , \ \  t \to \infty,$$  
so the decay rate for the solution $U$ to the 3D problem \eqref{1.1} is the same as the 5D linear wave equation.

This paper is organized as follows.
In the section 2, we formulate the problem and state our results in Theorems 2.1 and 2.2.
The section 3 is devoted to the proof of a blow-up result given in Theorem 2.1.
In the section 4, we derive a priori upper bounds and complete the proof of Theorem 2.2.


\section{Formulation of the problem and Results}


Since the Cauchy problem \eqref{1.1}-\eqref{1.2} is rotationally invariant,
we can make the substitution
$$
U(t,r\omega)=\frac{u(t,r)}{r} \quad \mbox{with} \ r=|x|,~ \omega=x/|x|,
$$
and obtain
\begin{align} \label{1.6}
& (\partial_t^2 + 2{w}(r) \partial_t -\partial_r^2 +{V(}r))u={|u|^p}/{r^{p-1}}
\quad \mbox{in} \ (0,T) \times (0,\infty),
\\ \label{1.7}
& u(0,r)=\varepsilon r \tilde{f}_0(r), \quad (\partial_t u)(0,r)=\varepsilon r \tilde{f}_1(r)
\quad \mbox{for} \ r>0
\end{align}
together with the boundary condition $u(t,0)=0$ for all $t \in (0,T)$.
By the relation \eqref{1.5}, we have the following factorization of the operator in \eqref{1.6}:
\begin{align} \label{1.10}
&\partial_t^2 + 2{w}(r) \partial_t -\partial_r^2 +{V}(r) = \\ \nonumber
= &(\partial_t -\partial_r+w(r)) (\partial_t +\partial_r+w(r)) \quad \mbox{for} \ r > 0.
\end{align}
This relation \eqref{1.10} suggests us to consider the following equations:
\begin{equation}\label{eq.fs1}
   P_+ v_+ = g, \quad   P_- v_- = g
\quad \mbox{in} \ (0,\infty) \times (0,\infty)
\end{equation}
with $$ P_\pm = \partial_t \pm \partial_r + w(r).
$$
Setting $W(r)=\displaystyle \int_0^r w(\tau) d\tau$ for $r \ge 0$,
\eqref{eq.fs1} can be rewitten as
\begin{align*}
& \partial_s ( e^{ W(r - s)} v_+ (t-s, r - s) ) = - e^{ W(r - s )} g(t-s, r - s ), \ 0\le s \le  \min \{t, r\},
\\
& \partial_s ( e^{ -W(r + s)} v_-(t-s, r + s) ) = - e^{ -W(r + s )} g(t-s, r + s ), \ 0\le s \le t
\end{align*}
for a fixed $(t,r)$.
Then a simple integration over $(0,T)$ gives

\begin{lm}
Let $t>0$, $r>0$.
If $v_\pm$ solves \eqref{eq.fs1}, then we have
\begin{align}\label{eq.fs3+}
   & v_+(t,r) = \\ \nonumber & = e^{-W(r)+W(r - T)}v_+(t-T,r - T)
+ \int_0^T e^{- W(r) + W(r- s)} g(t-s,r - s) ds
\end{align}
for $ 0 < T\le \min \{t, r\}$, and
\begin{align}\label{eq.fs3-}
   & v_- (t,r) = \\ \nonumber & = e^{W(r)-W(r+ T)} v_-(t-T,r + T)
+ \int_0^T e^{W(r) - W(r+ s)} g(t-s,r + s) ds
\end{align}
for $ 0 < T\le t$.
\end{lm}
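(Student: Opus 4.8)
The plan is to exploit the two first-order identities displayed immediately above the statement, which recast the transport equations $P_\pm v_\pm = g$ as exact $s$-derivatives along the characteristics passing through a fixed point $(t,r)$. To this end I would introduce the auxiliary functions
$$\phi_+(s) = e^{W(r-s)}\, v_+(t-s, r-s), \qquad \phi_-(s) = e^{-W(r+s)}\, v_-(t-s, r+s),$$
treating $(t,r)$ as fixed and letting $s$ range over $[0, \min\{t,r\}]$ in the $+$ case and over $[0,t]$ in the $-$ case. These ranges are precisely what guarantees that the arguments $(t-s, r\mp s)$ remain in the region where $v_\pm$ and $W$ are defined — in particular $r - s \ge 0$ for $\phi_+$, so that $W(r-s)$ makes sense.

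First I would verify the two derivative identities. Differentiating $\phi_+$ by the chain rule and using $W'(r) = w(r)$ gives
$$\phi_+'(s) = e^{W(r-s)}\bigl(-w\, v_+ - \partial_t v_+ - \partial_r v_+\bigr)\big|_{(t-s,\, r-s)}.$$
Since $v_+$ solves $P_+ v_+ = (\partial_t + \partial_r + w)v_+ = g$, the bracketed expression equals $-g(t-s, r-s)$, so $\phi_+'(s) = -e^{W(r-s)} g(t-s, r-s)$. The computation for $\phi_-$ is identical in structure: the chain rule produces the factor $-w\, v_- - \partial_t v_- + \partial_r v_-$, and the equation $P_- v_- = (\partial_t - \partial_r + w)v_- = g$ collapses this to $-g(t-s, r+s)$, whence $\phi_-'(s) = -e^{-W(r+s)} g(t-s, r+s)$. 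This reproduces exactly the two displayed identities.

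Next I would integrate each identity over $s \in [0,T]$ by the fundamental theorem of calculus. In the $+$ case this yields $\phi_+(T) - \phi_+(0) = -\int_0^T e^{W(r-s)} g(t-s, r-s)\, ds$; substituting $\phi_+(0) = e^{W(r)} v_+(t,r)$ and $\phi_+(T) = e^{W(r-T)} v_+(t-T, r-T)$, isolating $v_+(t,r)$, and multiplying through by $e^{-W(r)}$ produces \eqref{eq.fs3+}. The $-$ case is handled in the same way and yields \eqref{eq.fs3-}.

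There is no substantial obstacle here: the entire content is the chain-rule verification of the two derivative identities, which the factorization \eqref{1.10} together with $W' = w$ makes automatic. The only points requiring care are the bookkeeping on the admissible range of $s$ — the constraints $0 < T \le \min\{t,r\}$ and $0 < T \le t$ in the two cases — and the mild regularity of $v_\pm$ needed to differentiate the compositions and apply the fundamental theorem of calculus.
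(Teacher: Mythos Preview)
Your proof is correct and follows exactly the approach of the paper: the paper simply states the two $s$-derivative identities and then says ``a simple integration over $(0,T)$ gives'' the lemma, which is precisely what you carry out in detail.
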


Therefore, the mixed initial-boundary valued problem
\begin{align} \label{eq.fs4}
& P_- P_+ u= \,
F
\quad \mbox{in} \ (0,\infty) \times (0,\infty),
\\ \nonumber & u(t,0) =0, \quad \ \ \mbox{for} \ \ t \in (0,\infty),
\\ \nonumber
& u(0,r)= \varphi(r), \ \  (\partial_t u)(0,r)=\psi(r)
\quad \mbox{for} \ r \in (0,\infty),
\end{align}
has a solution represented via integration over
$$
\Delta_-(t,r)
=\{(\sigma, y) \in (0,\infty) \times (0,\infty); |t-r| < \sigma+y < t+r, \
\sigma-y <t-r \}
$$
with an appropriate kernel
\begin{align} \label{eq.d1}
E_-(t,r,y)=
e^{-W(r)} e^{2W( 2^{-1} (y- t+r) )} e^{-W(y)} \quad
\mbox{for}\ t, r \ge 0, ~ y \ge t-r.  
\end{align}
More precisely, we have the following assertion.

\begin{prop}
If $u$ solves \eqref{eq.fs4},
then for $0 < r < t$ we have
\begin{align} \label{eq.fs5m}
 u(t,r) 
=
& \frac12 \iint_{\Delta_-(t,r)} E_-(t-\sigma,r,y) F(\sigma, y) dy d\sigma
\\ \nonumber
& +\frac12 \int_{t-r}^{t+r} E_-(t,r,y) \left( \psi(y) + \varphi^\prime(y)+ w(y)
\varphi(y) \right) dy.
\end{align}
Furthermore, for $0 < t < r$ we have
\begin{align} \label{eq.fs5m1}
u(t,r)  
=
& \frac12 \iint_{\Delta_-(t,r)} E_-(t-\sigma,r,y) F(\sigma, y) dy d\sigma +E_-(t,r,r-t) \varphi(r-t)
\\ \nonumber
& +\frac12 \int_{r-t}^{t+r} E_-(t,r,y) \left( \psi(y) + \varphi^\prime(y)+ w(y)
\varphi(y) \right) dy.
\end{align}
\end{prop}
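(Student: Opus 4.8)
The plan is to exploit the factorization \eqref{1.10} by solving the two first-order transport problems in succession, each governed by the representation formulas \eqref{eq.fs3+}--\eqref{eq.fs3-}. First I would set $v:=P_+u$. Since $P_-P_+u=F$ by \eqref{eq.fs4}, the function $v$ solves $P_-v=F$, and its initial trace follows directly from the data:
\[
v(0,y)=(\partial_t u+\partial_r u+w\,u)(0,y)=\psi(y)+\varphi'(y)+w(y)\varphi(y).
\]
Applying \eqref{eq.fs3-} with $T=t$ (admissible since $0<T\le t$) traces the $P_-$-characteristic back to the line $\{t=0\}$ and gives
\[
v(t,r)=e^{W(r)-W(r+t)}v(0,r+t)+\int_0^t e^{W(r)-W(r+s)}F(t-s,r+s)\,ds.
\]

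Next I would recover $u$ from $P_+u=v$ via \eqref{eq.fs3+}. The admissible range $0<T\le\min\{t,r\}$ forces the case distinction in the statement. For $0<r<t$ I would take $T=r$, so the $P_+$-characteristic reaches the spatial boundary $\{r=0\}$; the boundary condition $u(t-r,0)=0$ annihilates the boundary term and leaves $u(t,r)=\int_0^r e^{-W(r)+W(r-s)}v(t-s,r-s)\,ds$. For $0<t<r$ I would instead take $T=t$, reaching $\{t=0\}$ where $u(0,r-t)=\varphi(r-t)$; a direct computation shows the resulting coefficient $e^{-W(r)+W(r-t)}$ equals $E_-(t,r,r-t)$ by \eqref{eq.d1}, which produces the extra term in \eqref{eq.fs5m1}.

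Finally I would substitute the formula for $v$ into these integrals. The contribution of the $v(0,\cdot)$ term, under the substitution $y=t+r-2s$, becomes the single integral over the initial data; the crucial identity is $r-s=\tfrac12(y-t+r)$, which reproduces exactly the middle weight $e^{2W(2^{-1}(y-t+r))}$ of the kernel, and the Jacobian $|ds|=dy/2$ supplies the prefactor $\tfrac12$, yielding $\tfrac12\int_{|t-r|}^{t+r}E_-(t,r,y)\bigl(\psi(y)+\varphi'(y)+w(y)\varphi(y)\bigr)\,dy$. The contribution of the $F$-term is a double integral; here I would change variables from $(s,\sigma)$ to $(\sigma',y)$ defined by $\sigma'=t-s-\sigma$ and $y=r-s+\sigma$, whose Jacobian equals $1/2$. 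Under this map one checks $r-s+\sigma=y$ and $r-s=\tfrac12(y-(t-\sigma')+r)$, so the three exponential factors collapse to $E_-(t-\sigma',r,y)$ while the source becomes $F(\sigma',y)$, giving the double integral over $\Delta_-(t,r)$.

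The main obstacle I anticipate is precisely this last change of variables: verifying that the image of the $(s,\sigma)$-domain is exactly the triangle $\Delta_-(t,r)$. Tracking the relations $\sigma'+y=t+r-2s$ and $\sigma'-y=t-r-2\sigma$, the constraints $0\le s\le r$ and $0\le\sigma\le t-s$ must be matched against the defining inequalities $|t-r|<\sigma'+y<t+r$ and $\sigma'-y<t-r$ together with $\sigma',y>0$; the same bookkeeping handles the region $0<t<r$ with the lower limit $r-t$. Everything else is a routine application of the transport representation of the Lemma.
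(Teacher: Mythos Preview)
Your proposal is correct and follows essentially the same route as the paper: set $u_+:=P_+u$, solve $P_-u_+=F$ via \eqref{eq.fs3-} with $T=t$, then solve $P_+u=u_+$ via \eqref{eq.fs3+} with $T=r$ (case $r<t$) or $T=t$ (case $t<r$), and finally substitute and change variables to obtain the kernel $E_-$ and the domain $\Delta_-(t,r)$. The only difference is presentational: the paper carries out the substitution explicitly as an iterated integral in $(s,s')$ and then passes to $(\sigma,y)$, whereas you describe the change of variables $(s,\sigma)\mapsto(\sigma',y)$ directly; the domain check you flag as the main obstacle is exactly what the paper leaves implicit.
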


\begin{proof}
Setting $u_+= P_+ u$, we find
\begin{align}\label{eq.fs5}
    P_- u_+ = F, \\ \nonumber
    P_+ u=u_+.
\end{align}
Using  \eqref{eq.fs3-} with $T=t$, we get
\begin{align} \label{eq.fs5m2}
 u_+(t,r) = & e^{ W(r)- W(r + t)}u_+(0,r+t )
 \\ \notag
  & + \int_0^{t} e^{ W(r) - W(r+ s)} F(t-s,r+ s) ds.
\end{align}
Using \eqref{eq.fs3+} with $T =r$ or $T=t$, we find
$$
u(t,r)=\underbrace{e^{- W(r) }u(t-r,0)}_{=0} + \int_0^r e^{- W(r) + W(r- s)} u_+(t-s, r- s) ds
$$
for  $0 <r < t$, and
\begin{align*}
u(t,r)=& e^{- W(r) +W(r-t)} u(0,r-t) +
\\
& +\int_0^{t} e^{- W(r) + W(r- s)} u_+(t-s, r- s) ds
\end{align*}
for $0<t <r$.
Combining  these relations, we get \eqref{eq.fs5m} and \eqref{eq.fs5m1}.
Indeed, when $t>r$, we have
\begin{align*}
u(t,r) = & \int_0^r e^{- W(r) + 2W(r- s) - W(r + t-2s)}
u_+(0,r+t-2s ) ds
\\ 
& + \int_0^r \int_0^{t-s} e^{- W(r) + 2W(r- s) - W(r-s+ s^\prime)}
F(t-s-s^\prime, r- s+s^\prime) ds^\prime ds
\\
= & \frac12 \int_{t-r}^{t+r} E_-(t,r,y) u_+(0,y) dy
\\ 
& + \frac12 \iint_{\Delta_-(t,r)} E_-(t-\sigma,r,y) F(\sigma, y) dy d\sigma.
\end{align*}
Since $ u_+(0,y)= \psi(y) + \varphi^\prime(y)+ w(y) \varphi(y)$, we get
\eqref{eq.fs5m}.
On the other hand, when $0<t<r$, we have
\begin{align*}
u(t,r) = & e^{- W(r) +W(r-t)} u(0,r-t)
\\
& + \int_0^t e^{- W(r) + 2W(r- s) - W(r + t-2s)}
u_+(0,r+t-2s ) ds
\\ 
& + \int_0^t \int_0^{t-s} e^{- W(r) + 2W(r- s) - W(r-s+ s^\prime)}
F(t-s-s^\prime, r- s+s^\prime) ds^\prime ds
\\
= & E_-(t,r,r-t) u(0,r-t)+ \frac12 \int_{r-t}^{t+r} E_-(t,r,y) u_+(0,y) dy
\\ 
& + \frac12 \iint_{\Delta_-(t,r)} E_-(t-\sigma,r,y) F(\sigma, y) dy d\sigma,
\end{align*}
which implies \eqref{eq.fs5m1}.
This completes the proof.
\end{proof}

\begin{rem}
The assumption \eqref{1.3}  implies
\begin{align} \label{eq.le1}
e^{W(r)} \sim \jb{r},  \ \ r >0.
\end{align}
Then the definition \eqref{eq.d1} of $E_-$ implies
\begin{align} \label{eq.le2}
E_-(t,r,y) \sim  \frac{\langle  r-t +y \rangle^2}{\langle r \rangle \langle y \rangle}.
\end{align}
\end{rem}

\begin{rem}
Suppose that \eqref{1.3} and \eqref{1.5} hold.
If $u$ solves
\begin{align} \label{eq.lb1}
&   (\partial_t^2 + 2{w}(r) \partial_t -\partial_r^2 +{V(}r))u= \,
|u|^p/r^{p-1}
\quad \mbox{in} \ (0,T) \times  (0,\infty),
\\ \nonumber & u(t,0) =0 \quad \ \ \mbox{for} \ \ t \in (0,T),
\\ \nonumber
& u(0,r)= 0, \ \   (\partial_t u)(0,r)= \psi(r)
\quad \mbox{for} \ r \in (0,\infty),
\end{align}
then from \eqref{eq.fs5m} and \eqref{eq.fs5m1} we find the following lower bound:
\begin{align} \label{eq.lb2}
& u(t,r) \gtrsim \widetilde{I_-}(|u|^p/y^{p-1})(t,r) + \widetilde{J_-}(\psi)(t,r)
\end{align}
for $t>0$, $r>0$, where we put
\begin{align}\label{eq.lb2a}
& \widetilde{I_-}(F)(t,r) =  \iint_{\Delta_-(t,r)} \widetilde{ E_-}(t-\sigma,r,y)F(\sigma, y) dy d\sigma,
\\
\label{eq.lb2b}
& \widetilde{J_-}(\psi)(t,r) = \int_{ |t-r| }^{t+r} \widetilde{ E_-}(t,r,y) \psi(y) dy
\end{align}
with
$$ \widetilde{E_-}(t,r,y) = \frac{\langle r-t + y \rangle^2}{\langle r \rangle \langle y \rangle}.$$
\end{rem}


Now we are in a position to state our blow-up result.

\begin{thm} \label{bw2}
Suppose that \eqref{1.3} and \eqref{1.5} hold.
Let 
$\psi \in C_0(\R^3)$ be a nonzero non-negative function. If
$1< p < ({3+\sqrt{17}})/{4}=p_0(5)$,
then the classical solution to the problem \eqref{eq.lb1} blows up in finite time.

Moreover, there exists a positive constant $C^*$ independent of
$\varepsilon$ such that
\begin{eqnarray}\label{life:upper0}
T^*(\varepsilon) \le \left\{ \begin{array}{ll}
\exp(C^*\varepsilon^{-p(p-1)})
& \mbox{if} \hspace{3mm} p=p_0(5), \\
C^*\varepsilon^{-p(p-1)/(1+3p-2p^2)}
& \mbox{if} \hspace{3mm} 1<p <p_0(5).
\end{array} \right.
\end{eqnarray}
Here $T^*(\varepsilon)$ denotes the lifespan of the problem  \eqref{eq.lb1}.
\end{thm}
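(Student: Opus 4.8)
The plan is to run a Kato--John type iteration on the pointwise lower bound \eqref{eq.lb2}, exploiting that $\psi \ge 0$ and $|u|^p/y^{p-1} \ge 0$ make both operators $\widetilde{I_-}$ in \eqref{eq.lb2a} and $\widetilde{J_-}$ in \eqref{eq.lb2b} monotone. Writing the data amplitude as $\varepsilon$ (so that $\psi$ is replaced by $\varepsilon\psi$), I would first read off a base lower bound for $u$ from the linear term $\varepsilon\widetilde{J_-}(\psi)$, then repeatedly insert the current lower bound into $\widetilde{I_-}$ to manufacture a growing sequence of lower bounds, and finally extract blow-up together with the lifespan estimate \eqref{life:upper0} from the resulting recursion for the constants.

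For the base step, let $\mathrm{supp}\,\psi \subset \{y \le R\}$ and $M = \int_0^\infty \psi\,dy > 0$. In the interior strip $\{(t,r): t \ge 2R,\ 0 \le t - r \le R/2\}$ the integration range $[\,|t-r|,\,t+r\,]$ in \eqref{eq.lb2b} meets $\mathrm{supp}\,\psi$; there one has $\langle y\rangle \sim 1$, $\langle r\rangle \sim \langle t\rangle$ and $\langle r - t + y\rangle \ge 1$, so by \eqref{eq.le2}
$$\widetilde{J_-}(\psi)(t,r) \gtrsim \frac{1}{\langle t\rangle}\int_{t-r}^{R}\psi(y)\,dy \gtrsim \frac{M}{\langle t\rangle}, \qquad \text{hence} \qquad u(t,r) \gtrsim \frac{\varepsilon}{\langle t\rangle}.$$
This is precisely the amplitude of a free five-dimensional radial wave, in agreement with the weight relations $e^{W(r)} \sim \langle r\rangle$ and \eqref{eq.le2} (which raise the effective dimension from $3$ to $5$) and with the announced far-field rate $U \sim t^{-2}$.

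For the iteration I would propagate this seed into the interior by inserting an ansatz of the form $u(t,r) \ge C\,\langle t\rangle^{-b}\langle t - r\rangle^{a}$, valid on $\{0 < r < t\}$, into $\widetilde{I_-}(|u|^p/y^{p-1})$. Passing to the characteristic coordinates $\xi = \sigma + y$, $\eta = \sigma - y$ rectifies the domain $\Delta_-(t,r)$ and turns the kernel weight $\langle r-(t-\sigma)+y\rangle^2 = \langle \xi - (t-r)\rangle^2$ into a single-variable factor; carrying out the $\eta$-integration and then the $\xi$-integration, with the spatial weights $\langle y\rangle^{-1} y^{-(p-1)}$, reproduces a bound of the same form with a definite multiplicative gain $C \mapsto D\,C^{p}$ and shifted exponents whose balance is governed by the value $n=5$. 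Iterating yields lower bounds $u \ge C_j\langle t\rangle^{-b_j}\langle t-r\rangle^{a_j}$ with a recursion of the shape $\log C_{j+1} \ge p\log C_j + (\text{affine in } j)$, the classical Strauss doubly-exponential mechanism.

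Finally I would analyse the recursion on a fixed interior ray. For $1 < p < p_0(5)$ one has $1 + 3p - 2p^2 = -(2p^2 - 3p - 1) > 0$, since $p$ lies below the positive root $p_0(5) = (3+\sqrt{17})/4$ of $2p^2 - 3p - 1 = 0$; summing the recursion then drives the lower bound to $+\infty$ once $t$ exceeds a threshold of order $\varepsilon^{-p(p-1)/(1+3p-2p^2)}$, which forces finite-time blow-up and gives the subcritical bound in \eqref{life:upper0}. At $p = p_0(5)$ the algebraic gain degenerates to a logarithmic one per step, and a slicing refinement of the iteration (summing over a sequence of light-cone slices) converts this into the exponential lifespan $\exp(C^*\varepsilon^{-p(p-1)})$. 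The hard part will be the iteration step: choosing $a,b$ so that $\widetilde{I_-}$ actually reproduces the profile, and controlling the double integral uniformly up to the light cone $r = t$ and near the boundary $r = 0$ where $y^{-(p-1)}$ is singular, with enough precision on the exponents of $\langle t\rangle$ and $\langle t-r\rangle$ to extract exactly the critical number $p_0(5)$ and the lifespan exponent $1 + 3p - 2p^2$.
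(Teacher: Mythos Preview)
Your plan is in the right direction and shares its starting point with the paper: one first obtains the light-cone seed $u(t,r)\gtrsim \varepsilon/\langle r\rangle$ on a strip $0\le r-t\le R/2$ from $\widetilde{J_-}(\psi)$, and then feeds it once through $\widetilde{I_-}(|u|^p/y^{p-1})$ to get the interior bound $u(t,r)\gtrsim \varepsilon^p\,\langle r\rangle^{-1}(t-r)^{-(2p-3)}$ for $0<t<2r$, $t-r>R$ (this is the paper's Lemma~\ref{fit1R}). From here, however, the paper does \emph{not} iterate a two-parameter pointwise ansatz as you propose. Instead it reduces to a \emph{scalar} problem: setting
\[
\langle u\rangle(\eta)=\inf_{(\sigma,y)\in\Sigma(\eta)}\langle y\rangle(\sigma-y)^{2p-3}|u(\sigma,y)|,\qquad
\Sigma(\eta)=\{0\le\sigma\le 2y,\ \sigma-y\ge\eta\},
\]
one shows $\langle u\rangle(y)\ge C_1\varepsilon^p$ and then, by a single computation with $\widetilde{I_-}$ in characteristic coordinates, the integral inequality
\[
\langle u\rangle(\xi)\ge C_2\int_1^\xi\Big(1-\frac{\beta}{\xi}\Big)\frac{[\langle u\rangle(\beta)]^p}{\beta^{\,p(2p-3)}}\,d\beta,\qquad \xi\ge 1.
\]
This feeds into an abstract blow-up lemma (Lemma~\ref{lem:boi-1}) with $\kappa=p(2p-3)$; the condition $\kappa\le 1$ is exactly $p\le p_0(5)$, and the lemma directly outputs the two lifespan bounds in \eqref{life:upper0} (the $\kappa=1$ branch giving the exponential bound at criticality, the $\kappa<1$ branch giving the power bound with exponent $1-\kappa=1+3p-2p^2$).

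Your John-style iteration with evolving exponents $(a_j,b_j)$ and constants $C_j$ would also succeed in the subcritical range, but it is heavier bookkeeping and, more importantly, it does not by itself deliver the sharp lifespan exponents: the doubly-exponential growth of $C_j$ gives blow-up, but extracting the precise power $p(p-1)/(1+3p-2p^2)$ and, at $p=p_0(5)$, the exponential bound requires an additional argument that you only gesture at (``slicing refinement''). The paper's reduction to a one-variable integral inequality for $\langle u\rangle$ is exactly the device that replaces that extra work; it fixes the profile exponent at $2p-3$ from the outset and lets a single ODE-type lemma handle both the subcritical and critical cases uniformly.
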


To show the counter part of Theorem \ref{bw2},
we introduce an integral equation associated with
the problem \eqref{1.6}-\eqref{1.7}:
\begin{align} \label{eq.io0}
u(t,r) 
=
\varepsilon u_0(t,r)+
\frac12 \iint_{\Delta_-(t,r)} E_-(t-\sigma,r,y) \frac{|u(\sigma, y)|^p}{y^{p-1} } dy d\sigma
\end{align}
for $t>0$, $r>0$, where we have set
\begin{align} \label{eq.io00}
u_0(t,r)=
& \frac12 \int_{|t-r|}^{t+r} E_-(t,r,y) \left( \psi(y) + \varphi^\prime(y)+ w(y)
\varphi(y) \right) dy
\\ \nonumber
& + \chi(r-t) E_-(t,r,r-t) \varphi(r-t)
\end{align}
with $\varphi(r)=r \tilde{f}_0(r)$, $\psi(r)=r \tilde{f}_1(r)$, where $\chi(s)=1$ for $s \ge 0$,
and $\chi(s)=0$ for $s<0$.

\begin{thm} \label{ge1}
Suppose that \eqref{1.3} and \eqref{1.5} hold.
Assume $p> ({3+\sqrt{17}) }/{4} = p_0(5)$.
Let $\tilde{f}_0 \in C^1([0,\infty))$, $\tilde{f}_1 \in C^0([0,\infty))$
satisfy
\begin{align} \label{eq.zj1d}
|\tilde{f}_0(r)| \le \langle r \rangle^{-\kappa-2}, \quad
|\tilde{f}_0^\prime (r)| +|\tilde{f}_1(r)| \le \langle r \rangle^{-\kappa-3} \ \ \mbox{for} \ r \ge 0
\end{align}
with some positive constant $\kappa \ge 2p-3$.
Then there exists $\varepsilon_0>0$ so that the corresponding integral equation
\eqref{eq.io0} to
the problem \eqref{1.6}-\eqref{1.7} has a global solution satisfying
$$
|u(t,r)| \lesssim \varepsilon r \, \langle r \rangle^{-2} \langle t-r \rangle^{-(2p-3)},
\ t>0, ~ r>0
$$
for any $\varepsilon \in (0,\varepsilon_0].$
\end{thm}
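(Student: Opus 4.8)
The plan is to solve the integral equation \eqref{eq.io0} by a contraction argument in a weighted space matched to the claimed decay. Write $q=2p-3$ and note that the hypothesis $p>p_0(5)=(3+\sqrt{17})/4$ is exactly the inequality $p(2p-3)=pq>1$; this single structural fact is what will make the nonlinear estimate close. Set the weight
$$\Lambda(t,r)=r\jb{r}^{-2}\jb{t-r}^{-q},$$
introduce the complete metric space
$$X=\Bigl\{u\in C\bigl((0,\infty)\times(0,\infty)\bigr):\ \|u\|_X:=\sup_{t,r>0}\Lambda(t,r)^{-1}|u(t,r)|<\infty\Bigr\}$$
with distance $d(u,v)=\|u-v\|_X$, and define
$$\Phi(u)(t,r)=\ve u_0(t,r)+\frac12\iint_{\Delta_-(t,r)}E_-(t-\sigma,r,y)\frac{|u(\sigma,y)|^p}{y^{p-1}}\,dy\,d\sigma.$$
I would prove that for a suitable constant $M$ and all small $\ve$ the map $\Phi$ sends the ball $B_{M\ve}=\{u:\|u\|_X\le M\ve\}$ into itself and contracts it; its fixed point is the sought global solution, and $\|u\|_X\le M\ve$ is precisely the asserted pointwise bound.

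The self-mapping property rests on two estimates. The first is the linear bound $|u_0(t,r)|\lesssim\Lambda(t,r)$. Using the kernel asymptotics \eqref{eq.le2} and the data hypotheses \eqref{eq.zj1d}, one checks that the source in \eqref{eq.io00} obeys $|\psi(y)+\vp'(y)+w(y)\vp(y)|\lesssim\jb{y}^{-\kappa-2}$ (with $\vp(r)=r\tilde{f}_0(r)$, $\psi(r)=r\tilde{f}_1(r)$ and $w(y)\lesssim\jb{y}^{-1}$). Inserting this and integrating over $y\in(|t-r|,t+r)$ produces the spatial factor $r\jb{r}^{-2}$ — the extra power of $r$ near the origin coming from the length $2r$ of the integration interval, which also encodes the boundary condition $u(t,0)=0$ — while the decay hypothesis $\kappa\ge 2p-3=q$ is exactly what is needed to extract $\jb{t-r}^{-q}$. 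The boundary term $\chi(r-t)E_-(t,r,r-t)\vp(r-t)$ is estimated directly from $E_-(t,r,r-t)\sim\jb{r}/\jb{r-t}$.

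The second and decisive estimate is the nonlinear one: if $|u(\sigma,y)|\le A\,\Lambda(\sigma,y)$ then
$$\iint_{\Delta_-(t,r)}\frac{\jb{r-t+\sigma+y}^2}{\jb{r}\jb{y}}\,\frac{|u(\sigma,y)|^p}{y^{p-1}}\,dy\,d\sigma\lesssim A^p\,\Lambda(t,r).$$
Inserting the ansatz gives $|u(\sigma,y)|^p y^{-(p-1)}\lesssim A^p\,y\jb{y}^{-2p}\jb{\sigma-y}^{-pq}$, and I would pass to the characteristic coordinates $\alpha=\sigma+y$, $\beta=\sigma-y$ (Jacobian $1/2$), under which $\Delta_-(t,r)$ becomes $\{|t-r|<\alpha<t+r,\ \beta<t-r\}\cap\{\alpha>|\beta|\}$ and the integrand splits into an $\alpha$-part carrying $\jb{r-t+\alpha}^2$, a $\beta$-part carrying $\jb{\beta}^{-pq}$, and the coupling weight $y\jb{y}^{-2p-1}$ with $y=(\alpha-\beta)/2$. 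The strategy is to split the region by the size of $y$ and of $\jb{\sigma-y}=\jb{\beta}$ and to carry out the inner $\beta$-integral first: the crucial point is that $\jb{\beta}^{-pq}$ is integrable over the unbounded $\beta$-range precisely because $pq>1$, which localizes the mass near $\beta=0$ (where $y\sim\alpha$) and leaves a clean power of $\alpha$; the remaining $\alpha$-integral of $\jb{r-t+\alpha}^2\,y\jb{y}^{-2p-1}$ over $(|t-r|,t+r)$ is then bounded, after division by the prefactor $\jb{r}^{-1}$, by a constant multiple of $r\jb{r}^{-2}\jb{t-r}^{-q}=\Lambda(t,r)$. The cases $r<t$ and $r>t$ are handled separately, the latter also accounting for the boundary term in \eqref{eq.fs5m1}; collecting the integrability requirements of the various pieces, the binding one is exactly $p(2p-3)>1$, i.e. $p>p_0(5)$.

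For the contraction I would use $\bigl||a|^p-|b|^p\bigr|\lesssim(|a|^{p-1}+|b|^{p-1})|a-b|$ together with the very same integral estimate, now with a factor $\|u\|_X^{p-1}+\|v\|_X^{p-1}$ in place of one power $A^{p-1}$ and $\|u-v\|_X\Lambda$ in place of one factor $A\Lambda$, to get $d(\Phi u,\Phi v)\lesssim(M\ve)^{p-1}d(u,v)$ on $B_{M\ve}$. Fixing $M$ from the linear estimate and then choosing $\ve_0$ so small that the resulting constant times $(M\ve_0)^{p-1}$ is at most $1/2$ makes $\Phi$ a contraction of $B_{M\ve}$ into itself, and the Banach fixed point theorem delivers the global solution with the stated bound. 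The main obstacle is plainly the nonlinear weighted estimate: tracking the three competing weights $\jb{r-t+\sigma+y}^2$, $\jb{y}^{-2p-1}$ and $\jb{\sigma-y}^{-pq}$ through the characteristic splitting — in particular checking that the borderline subregion near the light cone does not lose the decay and that every integrability threshold collapses to $p>p_0(5)$ — is where essentially all the difficulty lies; once that inequality is in hand, the linear estimate, the self-mapping, and the contraction are routine.
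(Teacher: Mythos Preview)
Your approach is essentially the same as the paper's: the same weighted norm $\|u\|=\sup r^{-1}\jb{r}^{2}\jb{t-r}^{2p-3}|u(t,r)|$, the linear bound on $u_0$ (the paper's Lemma~4.1), the key nonlinear estimate on $I_-$ via the characteristic change of variables $\alpha=\sigma+y$, $\beta=\sigma-y$ with the same reduction to $p(2p-3)>1$ (the paper's Lemma~4.2), followed by Picard iteration/contraction. One small slip: from \eqref{eq.le2} you get $E_-(t,r,r-t)\sim \jb{r-t}/\jb{r}$, not $\jb{r}/\jb{r-t}$.
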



\section{Blow-up}


In this section we prove the blow-up result
in an analogous manner to \cite{Joh79} (see also \cite{Tak94} and \cite{KubOht05}).
Our first step in this subsection is to obtain basic lower bounds of the solution to the problem \eqref{eq.lb1}.

\begin{lm} \label{fit1R}
Let $R \ge 1$ and $p>1$.
We assume
\begin{align} \label{bua}
 \psi(r) >0 , \ \forall r \in (0,R), \ \ \psi(r)=0, \ \ \forall r \geq R.
\end{align}
Then we have
\begin{equation}\label{eq.le6aR}
\widetilde{J_-}(\psi)(t,r)\gtrsim \frac{c_0}{ \langle r \rangle}, \ \
 c_0 := \inf_{R/2 \le r \le 2R/3} \psi(r)
\end{equation}
for
\begin{equation}\label{eq.le6a1R}
t < r < t+({R}/{2}), ~ t+r >  R.
\end{equation}
Moreover, if $u$ is the solution to \eqref{eq.lb1}, then we have
\begin{equation}\label{eq.le7R}
\widetilde{I_-}(|u|^p/y^{p-1})(t,r)
\gtrsim \frac{c_0^p }{\langle r \rangle (t-r)^{2p-3}}
\end{equation}
for $ 0<t < 2r $ and $t-r > R$.
\end{lm}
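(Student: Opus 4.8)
The plan is to prove the two lower bounds in turn, obtaining the second by inserting the first back into the integral inequality \eqref{eq.lb2}.

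\emph{The bound \eqref{eq.le6aR}.} Here I would simply discard everything in the $y$-integral \eqref{eq.lb2b} except the range $y\in[R/2,2R/3]$, on which $\psi(y)\ge c_0$ by the definition of $c_0$. The constraints \eqref{eq.le6a1R} ensure $t<r$, $|t-r|=r-t<R/2$ and $t+r>R$, so $[R/2,2R/3]\subset[\,|t-r|,\,t+r\,]$ and this restriction is admissible. On that range $r-t\ge0$ gives $\langle r-t+y\rangle\ge\langle y\rangle$, hence $\widetilde{E_-}(t,r,y)=\langle r-t+y\rangle^2/(\langle r\rangle\langle y\rangle)\ge\langle y\rangle/\langle r\rangle\ge y/\langle r\rangle\ge R/(2\langle r\rangle)$; integrating $c_0\widetilde{E_-}$ over an interval of length $R/6$ produces $\widetilde{J_-}(\psi)(t,r)\gtrsim c_0R^2/\langle r\rangle$, and $R\ge1$ yields the claim.

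\emph{The bound \eqref{eq.le7R}.} First I would note that \eqref{eq.lb2} with $\psi\ge0$ forces $u\gtrsim\widetilde{J_-}(\psi)\ge0$, so $u$ is nonnegative and, by the estimate just proved, $u(\sigma,y)\gtrsim c_0/\langle y\rangle$ on $\mathcal R=\{\,\sigma<y<\sigma+R/2,\ \sigma+y>R\,\}$. Substituting this into \eqref{eq.lb2a} and keeping only the part of $\Delta_-(t,r)$ lying in $\mathcal R$, I would change to the characteristic variables $\alpha=\sigma+y$, $\beta=y-\sigma$ (Jacobian $\tfrac12$). Under the hypotheses $t-r>R$ (so $r<t$) and $t<2r$ the admissible set becomes $\alpha\in(t-r,t+r)$, $\beta\in(0,R/2)$, the condition $\alpha>R$ holding automatically since $\alpha>t-r>R$; there $\langle y\rangle\sim y\sim\alpha$ and the kernel weight collapses to $\langle r-(t-\sigma)+y\rangle^2=\langle\alpha-(t-r)\rangle^2$. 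The integrand is then $\sim c_0^p\langle\alpha-(t-r)\rangle^2/(\langle r\rangle\,\alpha^{2p})$; the $\beta$-integration contributes a factor $\sim R$, and restricting $\alpha$ to $(2(t-r),3(t-r))\subset(t-r,t+r)$ — legitimate precisely because $t<2r$ — makes both $\alpha$ and $\alpha-(t-r)$ comparable to $t-r$, so the $\alpha$-integral is $\gtrsim(t-r)^{3-2p}$. Collecting these factors and using $R\ge1$ gives \eqref{eq.le7R}.

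The first estimate is routine; the technical heart, and the step where I expect errors to hide, is the bookkeeping in the second — checking that the selected subregion of $\Delta_-(t,r)$ really sits inside $\mathcal R$, that all the angle-bracket weights are comparable to the stated powers of $\alpha$ (and of $t-r$) on it, and that the restricted $\alpha$-window stays within $(t-r,t+r)$. This is exactly where the hypotheses $t-r>R$ and $t<2r$ enter, and carrying the geometry through correctly is what produces the exponent $2p-3$.
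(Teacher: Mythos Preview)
Your proposal is correct and follows essentially the same route as the paper: restrict the $y$-integral to $[R/2,2R/3]$ for \eqref{eq.le6aR}, then feed that pointwise lower bound for $u$ back into \eqref{eq.lb2a} on the strip $\{0<y-\sigma<R/2,\ t-r<\sigma+y<t+r\}$, pass to characteristic coordinates, and restrict $\alpha$ to a window of length comparable to $t-r$ inside $(t-r,t+r)$. The only cosmetic differences are that the paper uses $\beta=\sigma-y$ and the window $\alpha\in(t-r,3(t-r))$ (then integrates $(\alpha-(t-r))^2$ exactly), whereas you take $\beta=y-\sigma$ and the slightly smaller window $(2(t-r),3(t-r))$ so that $\alpha-(t-r)\sim t-r$ directly; both give the exponent $3-2p$.
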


\begin{proof}
The estimate \eqref{eq.le6aR} follows from \eqref{eq.lb2b} and our choice of $\psi.$
Indeed, if $(t,r)$ are close to the light cone as in \eqref{eq.le6a1R}, then we have
\begin{align} \notag
\widetilde{J_-}(\psi)(t,r) &
\gtrsim \int_{r-t}^{t+r} \frac{\langle r-t + y \rangle^2}{\langle r \rangle \langle y \rangle} \psi(y) dy
\\ \notag
& \gtrsim c_0 \int_{R/2}^{2R/3} \frac{1}{\langle r \rangle \langle y \rangle} dy
\gtrsim c_0 \langle r \rangle^{-1},
\end{align}
because of \eqref{eq.le6a1R}.

The estimate \eqref{eq.le7R} follows from \eqref{eq.lb2a} and \eqref{eq.le6aR},
provided $ 0<t < 2r $ and $t-r > R$.
Indeed, with $F(\sigma,y)= |u(\sigma,y)|^p/y^{p-1} $ we have
\begin{align}
\widetilde{I_-}(F)(t,r)
& =  \iint_{\Delta_-(t,r)} \widetilde{ E_-}(t-\sigma,r,y)
\frac{ |u(\sigma,y)|^p }{ y^{p-1} } dy d\sigma.
\end{align}
Since 
the domain
$$
\Sigma=\{(\sigma,y) \in (0,\infty) \times (0,\infty);
 0 \leq y-\sigma \leq R/2, \, t-r < \sigma+y <t+r \}
$$
is a subset of the integration domain
$ \Delta_-(t,r)$ for $t-r>R$,
we see from \eqref{eq.lb2} and \eqref{eq.le6aR} that
$u(\sigma, y) \gtrsim c_0 \langle y \rangle^{-1}$
for $(\sigma, y) \in \Sigma$.
Therefore, we get
\begin{align} \notag
\widetilde{I_-}(F)(t,r)
& \gtrsim c_0^p
\iint_{\Sigma}
\frac{\langle -t+\sigma +r+y \rangle^2 }{\langle r \rangle \langle y \rangle ^{2p}} dy d\sigma.
\end{align}
Now, introducing the coordinates $\alpha=\sigma+y$, $\beta=\sigma-y$, we obtain
\begin{align}
\widetilde{I_-}(F)(t,r)
& \gtrsim c_0^p \int_{t-r}^{t+r}
\frac{\langle \alpha-t+r \rangle^2 }{\langle r \rangle \langle \alpha \rangle^{2p}} d\alpha,
\end{align}
because $\beta \sim 1$.
Since $t<2r$, we have $t+r > 3(t-r)$,
so that
\begin{align} \notag
\langle r \rangle \widetilde{I_-}(F)(t,r)
& \gtrsim
c_0^p \int_{t-r}^{3(t-r)} \frac{(\alpha -t+r)^2 }{ \langle \alpha \rangle^{2p}}
d \alpha
\\ \notag
& \gtrsim
c_0^p
\langle t-r \rangle^{-2p}
\int_{t-r}^{3(t-r)} (\alpha-t+r)^2 d\alpha
\\ \notag
& \gtrsim
c_0^p
( t-r )^{-2p+3} ,
\end{align}
for $t-r>R$.
This completes the proof.
\end{proof}

For $\eta>0$, we introduce the following quantity:
\begin{eqnarray}
&& \langle u \rangle(\eta)
=\inf\{ \jb{y} (\sigma-y)^{2p-3}|u(\sigma,y)|:
(\sigma,y)\in {\Sigma}(\eta)\}, \label{inf0}
\\
&&
\Sigma(\eta)=\{(\sigma,y); \ 0 \le \sigma \le 2y, \ \sigma-y \ge \eta\}.
\label{def:Sigma}
\end{eqnarray}
Since we may assume $0<R \le 1$, \eqref{eq.lb2} and \eqref{eq.le7R} yield
\begin{eqnarray}\label{leb3}
\langle u \rangle(y) \ge C_1\varepsilon^{p} \quad \text{for}  \ y \ge 1.
\end{eqnarray}
We shall show that there exists a constant $C_2>0$ such that
\begin{equation}\label{leb4}
\langle u \rangle(\xi) \ge
C_2 \int_{1}^{\xi} \left(1-\frac{\beta}{\xi}\right)
\frac{[\langle u \rangle(\beta)]^p}
{\eta^{pp^*}}\, d\beta, \  \xi \ge 1
\end{equation}
for some $p^*>0$.
Let $\xi\ge 1$ and $(t,r) \in \Sigma(\xi)$.
For $\eta>0$ we set
$$
\tilde{\Sigma}(\eta,t-r)=\{(\sigma,y); \ y \ge t-r,\ \sigma+y \le 3(t-r), \ \sigma-y \ge \eta\}.
$$
It is easy to see that $\tilde{\Sigma}(\eta,t-r) \subset \Delta_-(t,r)$ for any $\eta>0$ and
$(t,r) \in \Sigma(\xi)$ and that
$(\sigma,y) \in \tilde{\Sigma}(1,t-r)$ implies $(\sigma,y) \in {\Sigma}(\sigma-y)$.
Therefore, we have
\begin{align*}
\widetilde{I_-}(F)(t,r)
& \gtrsim \frac{1}{\jb{r} }\iint_{\tilde{\Sigma}(1,t-r)}
\frac{(-t+\sigma+r+y)^2}{\jb{y} }
\frac{ |u(\sigma,y)|^p }{y^{p-1} }
\,dy\,d\sigma
\\
& \gtrsim \frac{(t-r)^2}{\jb{r} }\iint_{\tilde{\Sigma}(1,t-r)}
\frac{ \left[\langle u \rangle ({\sigma-y) }\right]^p }
{ \jb{y}^{2p}(\sigma-y)^{p(2p-3)}}
\,dy\,d\sigma,
\end{align*}
because $-t+\sigma+r+y \ge -t+r +(\sigma-y)+2y \ge 1+(t-r)$
for $(\sigma,y) \in \tilde{\Sigma}(1,t-r)$.
Changing the variables by $\beta=\sigma-y$, $z=y$, we have
\begin{eqnarray*}
u(t,r)
&\gtrsim & \frac{(t-r)^2 }{ \jb{r}} \int_{1}^{t-r}
\left( \int_{t-r}^{(3(t-r)-\beta)/2}
\frac{ \left[\langle u \rangle ({\beta})\right]^p }
{ \jb{z}^{2p} \beta^{p(2p-3)} } dz
\right)\,d\beta
\\
&\gtrsim & \frac{1 }{ \jb{r} (t-r)^{2p-2} } \int_{1}^{t-r}
\frac{t-r-\beta}{ 2}
\frac{ [\langle u \rangle(\beta)]^p}{\beta^{p(2p-3)}}
\,d\beta
\\
&\gtrsim & \frac{1 }{ \jb{r} (t-r)^{2p-3} } \int_{1}^{t-r}
\left( 1-\frac{\beta}{t-r} \right)
\frac{ [\langle u \rangle(\beta)]^p}{\beta^{p(2p-3)}}
\,d\beta.
\end{eqnarray*}
Since the function
$$
y\mapsto \int_{1}^{y}
\left(1-\frac{\beta}{y}\right)
\frac{[\langle u \rangle(\beta)]^p}
{\beta^{p(2p-3)}}\,d\beta
$$
is non-decreasing,
for any $(t,r)\in {\Sigma}(\xi)$, we have
$$ \jb{r} (t-r)^{p^*} u(t,r)
\ge
C_2 \int_{1}^{\xi}\left(1-\frac{\beta}{\xi}\right)
\frac{[\langle u \rangle(\beta)]^p}
{\beta^{p(2p-3)}}\,d\beta,$$
which implies \eqref{leb4} with $p^*=2p-3$.


Now we are in a position to employ Lemma \ref{lem:boi-1} below
with $\alpha=p$, $\beta=0$ and $\kappa=p(2p-3)$.
Then we see that $\langle u \rangle(y)$
blows up in a finite time $y=T_*(\varepsilon)$, provided $pp^* =p(2p-3) \le 1$.
The last condition is equivalent to $1<p \le p_0(5)$.
Therefore, the solution of \eqref{eq.lb1}
blows up in a finite time $T^*(\varepsilon) \le T_*(\varepsilon)$,
if $1<p \le p_0(5)$ and \eqref{bua} hold.
Moreover, we have the upper bound \eqref{life:upper0}
of the life span $T^*(\varepsilon)$.
Therefore, we can conclude the proof of Theorem \ref{bw2},
provided the following lemma is valid.
Although its proof has been given in \cite{KubOht05}, we shall present it in a compact way
in the appendix, for the sake of completeness.

\begin{lm} \label{lem:boi-1}
Let $C_1$, $C_2>0$, $\alpha$, $\beta\ge 0$, $\kappa\le 1$,
$\varepsilon\in (0,1]$, and $p>1$.
Suppose that $f(y)$ satisfies
$$
f(y)\ge C_1\varepsilon^{\alpha},\quad
f(y)\ge C_2\varepsilon^{\beta}\int_{1}^{y}\left(1-\frac{\eta}{y}\right)
\frac{f(\eta)^{p}}{\eta^{\kappa}}\, d\eta,\quad y\ge 1.
$$
Then, $f(y)$ blows up in a finite time $T_*(\varepsilon)$. Moreover,
there exists a constant $C^*=C^*(C_1,C_2,p,\kappa)>0$ such that
$$T_*(\varepsilon)\le \left\{ \begin{array}{ll}
\exp(C^*\varepsilon^{-\{(p-1)\alpha+\beta\}})
& \mbox{if} \hspace{2mm} \kappa=1, \\
C^*\varepsilon^{-\{(p-1)\alpha+\beta\}/(1-\kappa)}
& \mbox{if} \hspace{2mm} \kappa<1.
\end{array} \right.$$
\end{lm}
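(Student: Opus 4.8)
The plan is to prove blow-up by John's method of iterated lower bounds: feed each lower bound for $f$ back into the integral inequality to produce a sharper one, and then read off the lifespan from the rate at which these bounds degenerate as $\varepsilon\to 0$. Write $\Phi(y)=\int_1^y(1-\eta/y)\eta^{-\kappa}f(\eta)^p\,d\eta$, so that the hypotheses read $f\ge C_1\varepsilon^\alpha$ and $f\ge C_2\varepsilon^\beta\Phi$. The two regimes call for different ans\"atze: for $\kappa<1$ I would seek bounds of the form $f(y)\ge D_j\,y^{a_j}$, and for $\kappa=1$ bounds $f(y)\ge D_j\,(\log y)^{b_j}$, each valid for $y$ beyond an increasing threshold. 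In both cases the base case is $a_0=b_0=0$, $D_0=C_1\varepsilon^\alpha$ (valid for all $y\ge 1$), coming directly from the first hypothesis.

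The inductive step rests on one elementary but delicate integral estimate. Substituting $f(\eta)\ge D_j\eta^{a_j}$ into $f\ge C_2\varepsilon^\beta\Phi$ produces $\int_1^y(1-\eta/y)\eta^{q}\,d\eta$ with $q=pa_j-\kappa$, and an explicit computation gives $\int_1^y(1-\eta/y)\eta^q\,d\eta=\frac{y^{q+1}}{(q+1)(q+2)}-\frac{1}{q+1}+\frac{1}{y(q+2)}$, hence this is $\ge\frac12\frac{y^{q+1}}{(q+1)(q+2)}$ once $y^{q+1}\ge 2(q+2)$. The decisive point is that the per-step loss is only polynomial in the exponent (the factor $(q+1)^{-1}(q+2)^{-1}$), not exponential; truncating instead to $[y/2,y]$ would cost a fatal factor $2^{-q}$. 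This yields recursions of the form $a_{j+1}=pa_j+1-\kappa$ and $D_{j+1}=c\,C_2\varepsilon^\beta D_j^p/(pa_j)^2$, whence $a_j=\frac{1-\kappa}{p-1}(p^j-1)$, growing like $p^j$. The case $\kappa=1$ is handled identically after the substitution $s=\log\eta$, which turns the weight $\eta^{-1}$ into Lebesgue measure and gives $b_{j+1}=pb_j+1$, $b_j=\frac{p^j-1}{p-1}$, with the same polynomial loss $2(pb_j+1)^{-1}$.

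To solve the constant recursion I would pass to logarithms: $\log D_{j+1}=p\log D_j+\beta\log\varepsilon+\log(cC_2)-2\log(pa_j)$. Since $\log(pa_j)$ grows only linearly in $j$, the forcing terms are summable against $p^{-j}$, so $p^{-j}\log D_j$ converges to a finite $S_\infty$; tracking only the $\varepsilon$-dependence through $\gamma_{j+1}=p\gamma_j+\beta$, $\gamma_0=\alpha$, shows that the $\log\varepsilon$-coefficient of $S_\infty$ equals $\frac{(p-1)\alpha+\beta}{p-1}$. Then $\log f(y)\ge\log D_j+a_j\log y=p^j\big(S_\infty+\tfrac{1-\kappa}{p-1}\log y+o(1)\big)$, which forces $f(y)=+\infty$ as soon as $S_\infty+\frac{1-\kappa}{p-1}\log y>0$, i.e. for $\log y>-\frac{p-1}{1-\kappa}S_\infty$. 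Reading off this threshold gives $T_*(\varepsilon)\le C^*\varepsilon^{-((p-1)\alpha+\beta)/(1-\kappa)}$; the same computation with $b_j$ and $\log\log y$ in place of $a_j$ and $\log y$ gives $T_*(\varepsilon)\le\exp(C^*\varepsilon^{-((p-1)\alpha+\beta)})$ when $\kappa=1$.

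The main obstacle is twofold and lives entirely in the inductive step. First, one must keep the loss polynomial in the exponent, which is exactly what the full-range integral estimate achieves and what any crude truncation destroys; this is what lets the geometric growth of the exponents survive the accumulation of constants. Second, each derived bound holds only for $y$ past a threshold $Y_j\to\infty$, so strictly one integrates over $[Y_j,y]$ rather than $[1,y]$; since for large $q$ the integrand concentrates near $\eta=\frac{q}{q+1}y$, discarding $[1,Y_j]$ is harmless provided $Y_j\le y/2$, which holds in the range of $y$ near the blow-up threshold. I would organize this bookkeeping cleanly by noting the differential-inequality reformulation $\Phi'(y)=y^{-2}\int_1^y\eta^{1-\kappa}f(\eta)^p\,d\eta$ and $(y^2\Phi')'=y^{1-\kappa}f^p\ge c\,\varepsilon^{p\beta}y^{1-\kappa}\Phi^p$, which packages the whole iteration as a single super-linear ODE inequality and makes the threshold control transparent.
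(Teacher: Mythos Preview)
Your approach is correct in outline and takes a genuinely different route from the paper's. The paper does not iterate power-law lower bounds and track the constants $D_j$; instead it first performs a scaling substitution that absorbs all the $\varepsilon$-dependence at once. For $\kappa=1$ it sets $F(z)=(C_1\varepsilon^\alpha)^{-1}f\bigl(\exp(\varepsilon^{-\mu}z)\bigr)$ with $\mu=(p-1)\alpha+\beta$, and for $\kappa<1$ it sets $G(z)=(C_1\varepsilon^\alpha)^{-1}f(\varepsilon^{-\nu}e^z)$ with $\nu=\mu/(1-\kappa)$; in both cases the rescaled function satisfies the single parameter-free inequality
\[
F(z)\ge 1,\qquad F(z)\ge C\int_0^z\bigl(1-e^{-(z-\zeta)}\bigr)F(\zeta)^p\,d\zeta .
\]
The lifespan estimate then drops out of the substitution itself, and what remains is a purely qualitative blow-up lemma, proved by a sequence argument in which the threshold increments $T_{k+1}-T_k=2/k^2$ are summable. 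Your method trades this scaling trick for explicit bookkeeping of $D_j$ through the iteration; this is the classical John scheme and works, but the paper's substitution is the cleaner way to isolate the $\varepsilon$-dependence and avoids the logarithmic accounting for $S_\infty$ altogether.

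One point in your write-up needs tightening. You state that the thresholds satisfy $Y_j\to\infty$ and then argue that discarding $[1,Y_j]$ from the integration is harmless provided $Y_j\le y/2$. As written these are in tension: if $Y_j\to\infty$, then for any fixed finite $y$ only finitely many of your bounds apply, and you cannot conclude $f(y)=+\infty$ at a finite $y$. In fact, with your own full-range estimate the correct recursion is $Y_{j+1}\le Y_j\cdot[2(q_j+2)]^{1/(q_j+1)}$, and since $q_j$ grows geometrically the increments $\log Y_{j+1}-\log Y_j$ are summable, so $Y_j$ converges to a finite limit; this is what actually makes the iteration deliver finite-time blow-up. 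Alternatively, the second-order inequality $(y^2\Phi')'\ge c\,y^{1-\kappa}\Phi^p$ that you mention at the end does settle the matter cleanly via a Kato-type argument, but you should carry that through rather than leave it as a remark.
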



\section{Small data global existence}

Our first step is to obtain the following estimates for the homogeneous part of the solution
to the problem \eqref{eq.io0}.

\begin{lm} \label{fil1}
Suppose that \eqref{1.3} and \eqref{1.5} hold.
Assume that $ \varphi \in C^1([0,\infty))$, $ \psi \in C^0([0,\infty))$ satisfy
\begin{align} \label{eq.ad2}
|\varphi(r)| \le C_0\, r \, \langle r \rangle^{-(\kappa+2)}, \quad
|\varphi^\prime(r)| + |\psi(r) | \le C_0 \langle r \rangle^{-(\kappa+2)} \ \mbox{for} \ r \ge 0
\end{align}
with some positive constants $C_0$ and $\kappa$.
Then we have
\begin{equation}\label{eq.ue6a}
\left| \int_{ |t-r| }^{t+r} E_-(t,r,y) \left( \psi(y) +\varphi^\prime(y) + w(y)
\varphi(y) \right) dy
\right|
\lesssim \frac{ C_0 r}{\langle r \rangle^2 \langle t-r \rangle^{\kappa}}.
\end{equation}
for  $  t > 0$, $r>0$.
Moreover, for $0<t \le r$ we have
\begin{equation}\label{eq.ue6c}
  \left| E_-(t,r,r-t) \varphi(r-t) \right|
\lesssim
\frac{ C_0 r}{\langle r \rangle^2 \langle t-r \rangle^{\kappa}}.
\end{equation}
\end{lm}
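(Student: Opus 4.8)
The plan is to reduce the weighted integral to a one–dimensional estimate and then split the domain of integration according to the distance $|t-r|$ to the light cone.

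First I would absorb the source coefficient. Writing $G(y)=\psi(y)+\varphi'(y)+w(y)\varphi(y)$, the bounds \eqref{eq.ad2} give $|\psi(y)|+|\varphi'(y)|\lesssim C_0\langle y\rangle^{-(\kappa+2)}$ at once, so only the term $w(y)\varphi(y)$ needs attention. For $y\ge r_0$ the normalization \eqref{1.3} gives $w(y)=1/y$, whence $w(y)|\varphi(y)|\le y^{-1}\cdot C_0\,y\langle y\rangle^{-(\kappa+2)}=C_0\langle y\rangle^{-(\kappa+2)}$; for $0\le y\le r_0$ the function $w$ is bounded by continuity and $|\varphi(y)|\le C_0 r_0\langle y\rangle^{-(\kappa+2)}$. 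Hence $|G(y)|\lesssim C_0\langle y\rangle^{-(\kappa+2)}$. Feeding this together with the two–sided bound \eqref{eq.le2} for $E_-$ into the left–hand side of \eqref{eq.ue6a}, the estimate reduces to
\[
J(t,r):=\int_{|t-r|}^{t+r}\frac{\langle r-t+y\rangle^2}{\langle y\rangle^{\kappa+3}}\,dy\lesssim\frac{r}{\langle r\rangle\,\langle t-r\rangle^{\kappa}},
\]
since the outstanding factor $\langle r\rangle^{-1}$ from $E_-$ then produces exactly the weight in \eqref{eq.ue6a}.

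To prove this I set $a=|t-r|$ and substitute $w=y-a\ge0$. When $t\ge r$ this rewrites $J$ as $\int_0^{2r}\langle w\rangle^2\langle w+a\rangle^{-(\kappa+3)}\,dw$; when $t\le r$ it becomes $\int_0^{2t}\langle w+2a\rangle^2\langle w+a\rangle^{-(\kappa+3)}\,dw$, where I would first use $\langle w+2a\rangle\le2\langle w+a\rangle$ to discard two powers. In each case I split at $w=a$: on $\{w\le a\}$ one has $\langle w+a\rangle\sim\langle a\rangle$, and on $\{w\ge a\}$ one has $\langle w+a\rangle\sim\langle w\rangle$. The near piece contributes $\lesssim a\,\langle a\rangle^{-(\kappa+1)}\le\langle a\rangle^{-\kappa}$ and the far piece $\lesssim\int_a^{\infty}\langle w\rangle^{-(\kappa+1)}\,dw\lesssim\langle a\rangle^{-\kappa}$, the convergence using $\kappa>0$. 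It then remains to match the target weight $r\langle r\rangle^{-1}$. If $a\le2r$ these bounds give $J\lesssim\langle a\rangle^{-\kappa}$, which is the claim when $r\gtrsim1$; when $r\lesssim1$ the interval of integration has length $\lesssim r$, so tracking that length gives directly $J\lesssim r\,\langle a\rangle^{-\kappa}$. If $a>2r$ (only possible when $t\ge r$, deep inside the cone) one has $\langle w+a\rangle\sim\langle a\rangle$ throughout, so $J\lesssim r\langle r\rangle^2\langle a\rangle^{-(\kappa+3)}$, and $\langle a\rangle\gtrsim\langle r\rangle$ turns this into $r\langle r\rangle^{-1}\langle a\rangle^{-\kappa}$. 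Collecting the cases yields $J\lesssim r\langle r\rangle^{-1}\langle t-r\rangle^{-\kappa}$.

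Finally, the pointwise estimate \eqref{eq.ue6c} is immediate: \eqref{eq.le2} gives $E_-(t,r,r-t)\sim\langle r-t\rangle/\langle r\rangle$, and with $|\varphi(r-t)|\le C_0(r-t)\langle r-t\rangle^{-(\kappa+2)}$ this yields $|E_-(t,r,r-t)\varphi(r-t)|\lesssim C_0(r-t)\langle r\rangle^{-1}\langle r-t\rangle^{-(\kappa+1)}$; since $s\mapsto s/\langle s\rangle$ is increasing and $r-t\le r$, one has $(r-t)/\langle r-t\rangle\le r/\langle r\rangle$, which upgrades this to the asserted bound. The main obstacle throughout is the estimate of $J$ near the light cone and for small $r$: a naive bound there either loses a logarithm or an extra factor $\langle r\rangle/\langle t-r\rangle$, so the decomposition at $w=a$ together with honest bookkeeping of the length of the integration interval is precisely what recovers the sharp factor $r/\langle r\rangle$.
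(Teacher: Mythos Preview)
Your argument is correct, but it takes a longer route than the paper's. The key observation you miss is that on the integration domain one has $0\le r-t+y\le 2y$, so $\langle r-t+y\rangle\lesssim\langle y\rangle$ and therefore, by \eqref{eq.le2},
\[
E_-(t,r,y)\lesssim \frac{\langle y\rangle}{\langle r\rangle}\qquad\text{for }y\ge |t-r|.
\]
With this simplification the left side of \eqref{eq.ue6a} is bounded directly by $C_0\langle r\rangle^{-1}\int_{|t-r|}^{t+r}\langle y\rangle^{-\kappa-1}\,dy$, and the proof finishes with the two trivial estimates
\[
\int_{|t-r|}^{t+r}\langle y\rangle^{-\kappa-1}\,dy\lesssim\langle t-r\rangle^{-\kappa},
\qquad
\int_{|t-r|}^{t+r}\langle y\rangle^{-\kappa-1}\,dy\lesssim r\,\langle t-r\rangle^{-\kappa-1},
\]
the first for $r\ge 1$ (where $r\sim\langle r\rangle$) and the second for $r\le 1$. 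By contrast, you keep the full factor $\langle r-t+y\rangle^{2}$ and are then forced into the substitution $w=y-a$, the $t\gtrless r$ distinction, the split at $w=a$, and the separate treatment of $a>2r$. All of that is fine, and the extra decay you retain near $y=t-r$ (in the case $t>r$) could in principle give sharper bounds, but it is not needed for \eqref{eq.ue6a}; discarding it at the outset removes the case analysis entirely. Your treatment of \eqref{eq.ue6c} matches the paper's, with the monotonicity of $s\mapsto s/\langle s\rangle$ making explicit the step the paper leaves implicit.
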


\begin{proof}
We begin with the proof of \eqref{eq.ue6a}.
In the following, let $ t > 0$, $r>0$.
Since $0 \le r-t +y \le 2y$ for $y \ge |t-r|$, from \eqref{eq.le2} we have
\begin{align} \label{eq.zj1h}
|E_-(t,r,y)| \lesssim \langle y \rangle / \langle r \rangle \quad
\mbox{for} \ y \ge |t-r|.
\end{align}
Therefore, by using the assumptions on the data, the left hand side of \eqref{eq.ue6a} is estimated by
\begin{align*}
\langle r \rangle^{-1} \int_{|r-t|}^{t+r} { \langle y \rangle} \left( |\psi(y)| +|\varphi^\prime(y)|
+\langle y \rangle^{-1} |\varphi(y)| \right) dy
 \lesssim
C_0 \langle r \rangle^{-1}
\int_{|r-t|}^{t+r} { \langle y \rangle}^{-\kappa-1} dy,
\end{align*}
which leads to \eqref{eq.ue6a}, because the last integral is estimated as follows:
$$
\int_{|r-t|}^{t+r} { \langle y \rangle}^{-\kappa-1} dy \lesssim
{ \langle t-r \rangle}^{-\kappa},
\quad
\int_{|r-t|}^{t+r} { \langle y \rangle}^{-\kappa-1} dy \lesssim
r{ \langle t-r \rangle}^{-\kappa-1},
$$
and $\langle r \rangle \sim r$ for $r \ge 1$.

Next we prove \eqref{eq.ue6c}, by assuming $0< t \le r$.
From \eqref{eq.zj1h} we have
\begin{align*} 
|E_-(t,r,y) \varphi(y) | \lesssim \frac{C_0 y}{\langle r \rangle \langle y \rangle^{\kappa+1} } \quad
\mbox{for} \ y \ge |t-r|.
\end{align*}
Therefore, we have
$$
\left| E_-(t,r,r-t) \varphi(r-t) \right|
\lesssim
\frac{ C_0 (r - t)}{\langle r \rangle \langle r-t \rangle^{\kappa+1}},
$$
which implies \eqref{eq.ue6c}.
This completes the proof.
\end{proof}

It follows from \eqref{eq.io00} and Lemma \ref{fil1} that
\begin{align} \label{eq.io2}
|u_0(t,r)| \lesssim  \varepsilon r\, \langle r \rangle^{-2} \langle r-t \rangle^{-\kappa} \ \ \mbox{for}\  \ t>0, ~r>0,
\end{align}
provided \eqref{eq.zj1d}  holds, because we have set
$\varphi(r)=r \tilde{f}_0(r)$, $\psi(r)=r \tilde{f}_1(r)$.
This estimate suggests us 
to define the following weighted $L^\infty$-norm:
\begin{equation}
\| u \| =\sup_{ (r,t) \in [0,\infty) \times [0,T] }
\{ r^{-1} \langle r \rangle^2 \langle t-r \rangle^{2p-3}
|u(t,r)| \}.
\end{equation}

Our next step is to consider the integral operator appeared in \eqref{eq.io0}:
\begin{align} \label{eq.io1}
I_-(F)(t,r)=
\frac12 \iint_{\Delta_-(t,r)} E_-(t-\sigma,r,y) F(\sigma, y) dy d\sigma.
\end{align}

\begin{lm}\label{l.ub1}
If $p>p_0(5)=(3+\sqrt{17})/4$, then we have
\begin{equation}\label{eq.ue11}
\| {I_-} (F) \|
\lesssim \| u \|^p
\end{equation}
with $F(t,r)= |u(t,r)|^p/r^{p-1} $, and
\begin{equation}\label{eq.ue12}
\| {I_-} (G) \|
\lesssim \| u -v\| (\|u\|+\|v \|)^{p-1}
\end{equation}
with $G(t,r)= (|u(t,r)|^p-|v(t,r)|^p)/r^{p-1}$.
\end{lm}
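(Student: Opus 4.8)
The plan is to prove the first estimate \eqref{eq.ue11} in full and then obtain \eqref{eq.ue12} by an entirely parallel argument, so that everything reduces to a single kernel bound. For \eqref{eq.ue12} I would use the elementary inequality $\bigl||a|^p-|b|^p\bigr|\lesssim |a-b|\,(|a|+|b|)^{p-1}$ to get $|G(\sigma,y)|\lesssim |u(\sigma,y)-v(\sigma,y)|\,(|u(\sigma,y)|+|v(\sigma,y)|)^{p-1}/y^{p-1}$, and then insert the definition of $\|\cdot\|$ into each factor. This produces exactly the same integrand structure as for $F$, with the coefficient $\|u\|^p$ replaced by $\|u-v\|\,(\|u\|+\|v\|)^{p-1}$, so the bound established for \eqref{eq.ue11} transfers verbatim.

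For \eqref{eq.ue11} I would first replace the kernel by its pointwise size from \eqref{eq.le2}, namely $E_-(t-\sigma,r,y)\lesssim \langle r-t+\sigma+y\rangle^2/(\langle r\rangle\langle y\rangle)$, and use the definition of $\|u\|$ to bound $F(\sigma,y)=|u(\sigma,y)|^p/y^{p-1}\le \|u\|^p\, y\,\langle y\rangle^{-2p}\langle \sigma-y\rangle^{-p(2p-3)}$. This reduces the claim to the geometric estimate
\[
\iint_{\Delta_-(t,r)} \frac{\langle r-t+\sigma+y\rangle^2}{\langle r\rangle\langle y\rangle}\, \frac{y\,dy\,d\sigma}{\langle y\rangle^{2p}\langle \sigma-y\rangle^{p(2p-3)}} \lesssim \frac{r}{\langle r\rangle^2\langle t-r\rangle^{2p-3}},
\]
which is the heart of the matter. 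I would evaluate the left side in the characteristic coordinates $\alpha=\sigma+y$, $\beta=\sigma-y$ (so $y=(\alpha-\beta)/2$ and $dy\,d\sigma=\tfrac12\,d\alpha\,d\beta$), in which $\Delta_-(t,r)$ becomes $\{|t-r|<\alpha<t+r,\ -\alpha<\beta<t-r\}$ and the awkward factor $\langle r-t+\sigma+y\rangle$ becomes $\langle r-t+\alpha\rangle$, which is bounded by $\langle 2r\rangle$.

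The integral then splits according to the geometry. Outside the light cone ($t<r$) one has $\beta<t-r<0$, so $\langle\beta\rangle\ge\langle t-r\rangle$ on the whole domain and the full light-cone weight can be read off from $\langle\beta\rangle^{-p(2p-3)}\le\langle t-r\rangle^{-(2p-3)}$, the remaining integrations being convergent. Inside the cone ($t>r$) the decay must instead be extracted from the competition of $\langle r-t+\alpha\rangle^2$ against the strong $y$-decay $\langle y\rangle^{-2p}$, performing the $\beta$-integration first (exploiting the concentration near $\beta\approx0$) and then the $\alpha$-integration; here the kernel prefactor $1/\langle r\rangle$ together with the width $\sim r$ of the effective $\alpha$-range near $\alpha\approx t$ accounts for the spatial factor $r\langle r\rangle^{-2}$, in particular the vanishing as $r\to0$ consistent with the boundary condition $u(t,0)=0$. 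In every regime the single decisive point is that $\int \langle\beta\rangle^{-p(2p-3)}\,d\beta$ converges, which holds exactly when $p(2p-3)>1$, i.e. $2p^2-3p-1>0$, i.e. $p>(3+\sqrt{17})/4=p_0(5)$; at the endpoint $p=p_0(5)$ this integral diverges logarithmically, which is precisely why the hypothesis must be the strict inequality. I expect the main obstacle to be the bookkeeping required to recover both weights $r\langle r\rangle^{-2}$ and $\langle t-r\rangle^{-(2p-3)}$ simultaneously and sharply, which forces a further subdivision into near-cone and far-field pieces and careful tracking of the interplay between the $\langle y\rangle^{-2p}$, $\langle\beta\rangle^{-p(2p-3)}$, and $\langle r-t+\alpha\rangle^2$ factors.
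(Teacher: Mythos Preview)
Your proposal is correct and follows essentially the same route as the paper. Two points of comparison are worth making.

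First, the paper avoids tracking the ``awkward factor'' $\langle r-t+\sigma+y\rangle^2$ altogether. On $\Delta_-(t,r)$ the condition $\sigma-y<t-r$ gives $0\le r-t+\sigma+y<2y$, so \eqref{eq.le2} immediately yields the clean bound
\[
E_-(t-\sigma,r,y)\ \lesssim\ \frac{\langle y\rangle}{\langle r\rangle}
\]
(this is \eqref{eq.zj1h} with $t$ replaced by $t-\sigma$). Your alternative bound $\langle r-t+\alpha\rangle\le\langle 2r\rangle$ is also valid but throws away one power of $\langle r\rangle$, which you would then have to recover by the ``competition'' argument you sketch; the paper sidesteps this entirely.

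Second, because after this simplification the integrand depends only on $y$ and $\sigma-y$, the paper changes variables to $(\beta,z)=(\sigma-y,y)$ rather than the full characteristic pair $(\alpha,\beta)$. The reduced integral is
\[
I(t,r)=\iint_{\Delta_-(t,r)}\frac{y}{\langle r\rangle\langle y\rangle^{2p-1}\langle\sigma-y\rangle^{p(2p-3)}}\,dy\,d\sigma,
\]
and the case split (you anticipated correctly) is $t\gtrless r$ and then $r\gtrless 1$, with the $\beta$-integral's convergence governed precisely by $p(2p-3)>1$, i.e.\ $p>p_0(5)$. Your handling of \eqref{eq.ue12} via $\bigl||a|^p-|b|^p\bigr|\lesssim|a-b|(|a|+|b|)^{p-1}$ is exactly what the paper does.
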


\begin{proof}
We begin with the proof of \eqref{eq.ue11}.
For $(y,\sigma) \in \Delta_-(t,r)$ we have $y \ge |t-r-\sigma|$, so that
\eqref{eq.zj1h} yields
$$
E_-(t-\sigma,r,y) \lesssim
\langle r \rangle^{-1} \langle y \rangle \quad \mbox{for} \ (y,\sigma) \in \Delta_-(t,r).
$$
Therefore, using the following estimate
$$
\langle r \rangle^{2p}\langle t-r \rangle^{p(2p-3)} |F(t,r)|
\le r \|u\|^p
$$
in \eqref{eq.io1}, we get
$$
|{I_-} (F)| \lesssim \|u\|^p I(t,r),
$$
where we put
$$
I(t,r) = \iint_{\Delta_-(t,r) }
\frac{ y}{\langle r \rangle \langle y \rangle^{2p-1}
\langle \sigma - y \rangle^{p(2p-3) } } dy d\sigma.
$$

First, suppose $t\ge r$.
To evaluate the integral, we pass to the coordinates
\begin{align} \label{eq.zj0}
 \beta = \sigma-y, \ z=y
\end{align}
and deduce
\begin{align} \label{eq.zj1}
I(t,r)
& \lesssim \int_{r-t}^{t-r} \int_{(t-r-\beta)/2}^{(t+r-\beta)/2}
\frac{1} 
{\langle r \rangle
\langle \beta \rangle^{p(2p-3)} \langle z\rangle^{2p-2}} dz d \beta
\\ \notag
& \hspace{5mm}
+\int_{-t-r}^{r-t} \int_{-\beta}^{(t+r-\beta)/2}
\frac{1} 
{\langle r \rangle \langle \beta \rangle^{p(2p-3)} \langle{z}\rangle^{2p-2}} dz d \beta.
\end{align}

Let $r \ge 1$.
Noting that $t-r-\beta>0$ for $\beta<t-r$, and $-\beta>0$ for $\beta<r-t$, we get
\begin{align*}
\langle r \rangle I(t,r)
& \lesssim \int_{r-t}^{t-r} \frac{1}{\langle \beta \rangle^{p(2p-3)} \langle t-r-\beta\rangle^{2p-3}} d \beta
\\
& \hspace{5mm}
+\int_{-t-r}^{r-t}
\frac{1} 
{\langle \beta \rangle^{p(2p-3)} \langle{\beta}\rangle^{2p-3}} d \beta,
\end{align*}
since $p>p_0(5)>3/2$.
Splitting the integral at $\beta=(t-r)/2$ in the first term of the right hand side, we obtain
\begin{align*}
\langle r \rangle I(t,r)
& \lesssim \frac1{\langle t-r \rangle^{p(2p-3)}} \int_{(t-r)/2}^{t-r} \frac{1}{ \langle t-r-\beta\rangle^{2p-3}} d \beta
\\
& \hspace{5mm} + \frac1 {\langle{t-r}\rangle^{2p-3} }
\int_{-t-r}^{(t-r)/2}
\frac{1} 
{\langle \beta \rangle^{p(2p-3)} } d \beta
\\
& \equiv I_1+I_2.
\end{align*}
Since $p>p_0(5)$ is equivalent to $p(2p-3)>1$, it is clear that
\begin{align} \label{eq.zj2}
\langle t-r \rangle^{(2p-3)}
I_{2} \lesssim 1.
\end{align}
On the other hand, it follows that
\begin{align*}
\langle t-r \rangle^{p(2p-3)}
I_{1} \lesssim  \left\{
\begin{array}{ll}
1 & \mbox{if}\ p>2,
\\
\log (2+t-r) & \mbox{if}\ p=2,
\\
\langle t-r \rangle^{4-2p} & \mbox{if}\ p<2,
\end{array}
\right.
\end{align*}
so that
\begin{align} \label{eq.zj3}
\langle t-r \rangle^{(2p-3)}
I_{1} \lesssim  1,
\end{align}
because $4-2p-(p-1)(2p-3)=1-p(2p-3)<0$ for $p>p_0(5)$.
Combining \eqref{eq.zj2} with \eqref{eq.zj3}, we obtain for $r \ge 1$
\begin{align} \label{eq.zj4}
I(t,r) \lesssim r \langle r \rangle^{-2} \langle{t-r}\rangle^{-(2p-3)}.
\end{align}
When $0<r \le 1$, we see from \eqref{eq.zj1} that
\begin{align*}
\langle r \rangle I(t,r)
& \lesssim r \int_{r-t}^{t-r} \frac{1}{\langle \beta \rangle^{p(2p-3)} \langle t-r-\beta\rangle^{2p-2}} d \beta
+\int_{-t-r}^{r-t}
\frac{1} 
{ \langle{\beta}\rangle^{2p-3}} d \beta
\\
& \lesssim r (\langle t-r \rangle^{-p(2p-3)} + \langle t-r \rangle^{-(2p-2)} )
+ r \langle{t-r}\rangle^{-(2p-3)}
\end{align*}
for $p>p_0(5)>3/2$.
Thus we get \eqref{eq.zj4} for $0<r \le 1$.

Next, suppose $0<t<r$.
Then the change of variables \eqref{eq.zj0} gives
\begin{align*} 
I(t,r)
& \lesssim \int_{-t-r}^{t-r} \int_{-\beta}^{(t+r-\beta)/2}
\frac{1} 
{\langle r \rangle \langle \beta \rangle^{p(2p-3)} \langle{z}\rangle^{2p-2}} dz d \beta
\\
& \lesssim \int_{-t-r}^{t-r}
\frac{1} 
{\langle r \rangle \langle \beta \rangle^{p(2p-3)} \langle{\beta}\rangle^{2p-3}} d \beta,
\end{align*}
since $-\beta>0$ for $\beta<t-r$, and $p>3/2$.
When $r \ge 1$, we get
\begin{align*}
\langle r \rangle I(t,r)
 \lesssim \langle{t-r}\rangle^{-(2p-3)},
\end{align*}
since $p>p_0(5)>3/2$.
On the other hand, when $0<r\le 1$, we use the following estimate:
\begin{align*}
\langle r \rangle I(t,r)
 \lesssim t \langle{t-r}\rangle^{-p(2p-3)-(2p-3)}
 \lesssim r \langle{t-r}\rangle^{-(2p-3)}.
\end{align*}
These estimates leads to \eqref{eq.zj4}, and hence \eqref{eq.ue11} holds.

In order to prove \eqref{eq.ue12}, it suffices to notice the following estimate:
$$
\langle r \rangle^{2p} \langle t-r \rangle^{p(2p-3)} |G(r,t)|
\le p r \| u - v\| (\|u\|+\|v \|)^{p-1},
$$
because the remaining part of the proof is the same as before.
This completes the proof.
\end{proof}

\begin{proof}[Proof of Theorem \ref{ge1}]
If we define a sequence $\{u_n\}_{n=-1}^\infty$ by
\begin{align} \label{eq.io3}
u_{n+1}(t,r) 
=
\varepsilon u_0(t,r)+  I_- ( |u_n|^p/ r^{p-1}) \ \ \mbox{for} \ t>0,~ r>0,
\end{align}
with $u_{-1} \equiv 0$, then \eqref{eq.io2} and Lemma \ref{l.ub1} shows that
it is a Cauchy sequence in
$$
X=\{ u \in C([0,\infty) \times [0, \infty)); ~ \| u \| < \infty \}
$$
for sufficiently small $\varepsilon$.
Thus, we get a solution to the integral equation \eqref{eq.io0}.
This completes the proof.
\end{proof}

\renewcommand{\theequation}{A.\arabic{equation}}
\setcounter{equation}{0}  
\renewcommand{\thelm}{A.\arabic{lm}}
\renewcommand{\thethm}{A.\arabic{theorem}}
\renewcommand{\theprop}{A.\arabic{prop}}
\setcounter{thm}{1}
\section*{Appendix A: Proof of Lemma \ref{lem:boi-1} }
First, we consider the case $\kappa=1$. We put
$$F(z)=(C_1\varepsilon^\alpha)^{-1}
f(\exp(\varepsilon^{-\mu}z)),
\quad \mu=(p-1)\alpha+\beta.$$
Then we have 
\begin{equation*}
F(z)\ge 1,\quad
F(z)\ge C_1^{p-1}C_2\int_{0}^{z}\left(1-e^{-\ve^{-\mu} (z-\zeta)}\right)
F(\zeta)^{p}\, d\zeta, \quad z\ge 0.
\end{equation*}
Since the function $z \mapsto (1-e^{-z})$
is increasing on $[0,\infty)$,  for $0<\varepsilon \le 1$, we obtain
\begin{equation}\label{mo6}
F(z)\ge 1,\quad
F(z)\ge C_1^{p-1}C_2\int_{0}^{z}\left(1-e^{-(z-\zeta)}\right)
F(\zeta)^{p}\, d\zeta, \quad z\ge 0.
\end{equation}
By Lemma \ref{lem:boi-2} below, we can conclude that 
$F(z)$ blows up in a finite time and the desired estimates of the lifespan
hold for the case $\kappa=1$.

Next, we consider the case $\kappa<1$. If we put
$$G(z)=(C_1\varepsilon^\alpha)^{-1}
f(\varepsilon^{-\nu}e^z),
\quad \nu=\frac{(p-1)\alpha+\beta}{1-\kappa},
$$

\newpage

then we have
\begin{equation*}
G(z)\ge 1,\quad
G(z)\ge C_1^{p-1}C_2\ve^{(p-1)\alpha+\beta}
  \int_{0}^{z} \left(1-e^{-(z-\zeta)}\right) \frac{ G(\zeta)^{p}}
{ (\ve^{-\nu} e^\zeta )^{\kappa-1} }
\, d\zeta
\end{equation*}
for $z \ge 0$ and $0<\varepsilon \le 1$, 
which implies \eqref{mo6}, because $e^\zeta \ge 1$ for $\zeta \ge 0$.
Therefore, what we have to do is to show the following lemma.


\begin{lm}\label{lem:boi-2}
Let $C>0$ and $p>1$.
Suppose that $f(t)$ satisfies
\begin{equation}\label{mo8}
f(t)\ge 1,\quad
f(t)\ge C \int_{0}^{t}(1-e^{-(t-\tau)}) f(\tau)^{p}\, d\tau
\end{equation}
for any $t\ge 0$.
Then, $f(t)$ blows up in a finite time.
\end{lm}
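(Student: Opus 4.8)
The plan is to derive a closed differential inequality for a suitable weighted average of $f$ and then show that the resulting ODE comparison forces blow-up in finite time. First I would introduce the iterated integral of the right-hand side. Set
\[
G(t) = C\int_0^t (1-e^{-(t-\tau)}) f(\tau)^p\, d\tau,
\]
so that the hypothesis reads $f(t)\ge 1$ and $f(t)\ge G(t)$ for all $t\ge 0$. The key observation is that the kernel $1-e^{-(t-\tau)}$ satisfies $(\partial_t^2+\partial_t)(1-e^{-(t-\tau)})=0$ with $(1-e^{-(t-\tau)})|_{\tau=t}=0$ and $\partial_t(1-e^{-(t-\tau)})|_{\tau=t}=1$, which means $G$ solves the linear ODE
\[
G''(t)+G'(t)=C f(t)^p,\qquad G(0)=G'(0)=0.
\]
Because $f\ge 1$ and $f\ge G$, the right-hand side is bounded below by $C\max\{1,G(t)\}^p\ge C\,G(t)^p$ once $G(t)\ge 1$, and is bounded below by $C$ for all $t$.

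Next I would extract a first-order super-solution. Since $G(0)=G'(0)=0$ and $G''+G'=Cf^p\ge C>0$, both $G$ and $G'$ become and stay positive for $t>0$; in particular $G''+G'>0$ gives $G'(t)\le (G'(t)+G(t))'$ is not quite the right manipulation, so instead I would work with $H(t):=e^{t}G'(t)$, for which $H'(t)=e^t(G''+G')=Ce^t f(t)^p$. Integrating and using $f\ge G$ yields a lower bound for $G'$ in terms of $\int_0^t e^{\tau}G(\tau)^p\,d\tau$. The cleaner route is to note that for $t$ large enough that $G(t)\ge 1$, we have $G''+G'\ge C G^p$, and since $G'\ge 0$ we may drop the damping term to obtain the autonomous differential inequality
\[
G''(t)\ge C\,G(t)^p - G'(t).
\]
To remove the $-G'$ term I would multiply by $G'$ and integrate, using monotonicity of $G'$, to reach an energy inequality $\tfrac12\big((G')^2\big)' \ge \tfrac{C}{p+1}\big(G^{p+1}\big)' - (G')^2$, from which a standard concavity/ODE-comparison argument yields $G'(t)\gtrsim G(t)^{(p+1)/2}$ for large $t$; separating variables in $G'\ge c\,G^{(p+1)/2}$ then forces $G$, and hence $f$, to blow up in finite time because $(p+1)/2>1$.

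The main obstacle will be handling the damping term $-G'$ cleanly so as to close the comparison with an \emph{explicit} super-linear autonomous ODE, rather than merely showing unbounded growth. The danger is that the $-G'(t)$ feedback could, a priori, temper the growth enough to prevent finite-time blow-up; the resolution is precisely that the nonlinearity $G^p$ with $p>1$ dominates the linear damping once $G$ is large, so the energy argument above gives a genuine power-type lower bound on $G'$. I expect the technical care to lie in establishing that $G(t)$ does eventually exceed $1$ (which follows from $G''+G'\ge C$ with zero initial data, giving $G(t)=C(t-1+e^{-t})\to\infty$, so $G(t)\ge 1$ for all $t\ge t_0$ with an explicit $t_0$), and in verifying that the constants line up so that the separated-variables integral $\int^\infty G^{-(p+1)/2}\,dG$ converges, which is immediate from $p>1$. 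Since the statement only asserts finite-time blow-up without a quantitative lifespan, the qualitative comparison suffices and no sharp constant tracking is required.
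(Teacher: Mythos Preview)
Your reduction to the second–order ODE $G''+G'=Cf^p$ with $G(0)=G'(0)=0$ is correct, and your observation that $G''+G'\ge C$ forces $G(t)\ge C(t-1+e^{-t})\to\infty$ matches the paper's first step exactly. After that, however, your route and the paper's diverge, and your outline has a real gap at the point you yourself flag as the main obstacle.

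The energy inequality you write down,
\[
\tfrac12\bigl((G')^2\bigr)' \;\ge\; \tfrac{C}{p+1}\bigl(G^{p+1}\bigr)' - (G')^2,
\]
does \emph{not} yield $G'\gtrsim G^{(p+1)/2}$ by any ``standard'' integration: after integrating you pick up $-\int_{t_0}^{t}(G')^2\,ds$, and without an a~priori bound of the type $(G')^2\lesssim G^{p+1}$ (which is essentially what you are trying to prove) there is no way to absorb this term. The vague appeal to ``the nonlinearity dominates the damping once $G$ is large'' is precisely the missing argument. The step can be rescued, but not by the energy identity: passing to the phase variable $w=(G')^2$ as a function of $G$ gives $\tfrac{dw}{dG}\ge 2CG^p-2\sqrt{w}$, and a barrier comparison with $w=\tfrac{C}{p+1}G^{p+1}$ then shows $w(G)\ge \tfrac{C}{p+1}G^{p+1}$ for all large $G$, whence $G'\ge cG^{(p+1)/2}$ and finite-time blow-up. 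This phase-plane step is what is actually needed; your sketch omits it.

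For comparison, the paper avoids the ODE entirely and iterates directly on the integral inequality. Starting from $f(t)\ge A_1$ for $t\ge T_1$, it sets $A_{k+1}=A_k^{\,p}/(\gamma k^2)$ and $T_{k+1}=T_k+2k^{-2}$, and uses
\[
f(t)\;\ge\; C\!\int_{t-k^{-2}}^{t}(1-e^{-(t-\tau)})f(\tau)^p\,d\tau
\;\ge\; c\,A_k^{\,p}\,k^{-4}
\]
(via $1-e^{-\sigma}\ge(1-e^{-1})\sigma$) to propagate $f(t)\ge A_k$ on $t\ge T_k$. Since $\log A_k\gtrsim p^{k}$ while $T_k\to T_1+\sum 2/j^2<\infty$, blow-up follows. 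The key device is the \emph{summable} sequence of time increments $2/k^2$; this is what converts iterated growth into finite-time blow-up and has no analogue in your energy argument. Your ODE approach, once the phase-plane step is supplied, is a legitimate alternative and arguably more conceptual, but as written it does not close.
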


\begin{proof}
By \eqref{mo8}, for $t\ge 1$, we have
\begin{eqnarray*}
f(t)
&\ge& 
C \int_{0}^{t}(1-e^{-(t-\tau)}) \, d\tau 
=C(t-1+e^{-t} )
\\
&\ge &C (t-1).
\end{eqnarray*}
We put
\begin{equation}\label{def:A1}
A_1=\exp\left(1+\frac{\log \gamma}{p-1}
+2\sum_{j=1}^{\infty}\frac{\log j}{p^j}\right), \quad
\gamma=\max\{\frac{2}{C(1-e^{-1})},1\}.
\end{equation}
Then, there exists $T_1>1$ such that $f(t)\ge A_1$ for any $t\ge T_1$.

Now, we define sequences $\{A_k\}$ and $\{T_k\}$ by
\begin{equation}\label{def:AT}
A_{k+1}=\frac{A_k^p}{\gamma k^{2}}, \quad
T_{k+1}=T_k+\frac{2}{k^2},
\quad k\in \mathbb{N}.
\end{equation}
Then, for any $k\in \mathbb{N}$, we see that
$f(t)\ge A_k$ for $t\ge T_k$.
Indeed, by \eqref{mo8},
for $t\ge T_k+2k^{-2}$, we have
\begin{eqnarray*}
f(t) &\ge& C \int_{t-k^{-2}}^{t}
(1-	e^{-(t-\tau)}) f(\tau)^{p}\, d\tau \\
&\ge& C A_k^p \int_{0}^{k^{-2}}
(1-e^{-\sigma}) \, d\sigma
\ge \frac{C (1-e^{-1})}{2} A_k^p k^{-2},
\end{eqnarray*}
because $1-e^{-\sigma} \ge (1-e^{-1}) \sigma$ for $0 \le \sigma \le 1$.
Moreover, by \eqref{def:A1} and \eqref{def:AT}, we have
\begin{eqnarray*}
\log A_{k+1} &=&
p^k
\left(\log A_1-\sum_{j=1}^{k}\frac{\log \gamma}{p^j}
-2\sum_{j=1}^{k}\frac{\log j}{p^j}\right) \\
&\ge&
 p^k\left(\log A_1-\frac{\log \gamma}{p-1}
-2\sum_{j=1}^{\infty}\frac{\log j}{p^j}\right)= p^k, \\
T_{k+1} &=& T_1+\sum_{j=1}^{k}\frac{2}{j^2}
\end{eqnarray*}
for any $k\in \mathbb{N}$.
Therefore, $f(t)$ blows up in a finite time.
\end{proof}



\vspace{3mm}

\noindent
{\bf Acknowledgement}

We are grateful to Professor M. Ikeda and Professor M. Sobajima for valuable discussion during the preparation of this work.

\bibliographystyle{amsplain}
\bibliography{DWP30(Arxiv)}

\providecommand{\bysame}{\leavevmode\hbox to3em{\hrulefill}\thinspace}
\providecommand{\MR}{\relax\ifhmode\unskip\space\fi MR }
\providecommand{\MRhref}[2]{%
  \href{http://www.ams.org/mathscinet-getitem?mr=#1}{#2}
}
\providecommand{\href}[2]{#2}
\begin{thebibliography}{10}

\bibitem{DAbLucRei15}
M.~D'Abbicco, S.~Lucente, and M.~Reissig, \emph{A shift in the {S}trauss
  exponent for semilinear wave equations with a not effective damping}, J.
  Differential Equations \textbf{259} (2015), 5040–--5073.

\bibitem{DGK01}
P.~D'Ancona, V.~Georgiev, and H.~Kubo, \emph{Weighted decay estimates for the
  wave equation}, J. Differential Equations \textbf{177} (2001), no.~1,
  146--208. \MR{1867616}

\bibitem{G2005}
V.~Georgiev, \emph{Semilinear hyperbolic equations}, second ed., MSJ Memoirs,
  vol.~7, Mathematical Society of Japan, Tokyo, 2005, With a preface by Y.
  Shibata. \MR{2145150}

\bibitem{GHK01}
V.~Georgiev, Ch. Heiming, and H.~Kubo, \emph{Supercritical semilinear wave
  equation with non-negative potential}, Comm. Partial Differential Equations
  \textbf{26} (2001), no.~11-12, 2267--2303. \MR{1876418}

\bibitem{GLS97}
V.~Georgiev, H.~Lindblad, and Ch. Sogge, \emph{Weighted {S}trichartz estimates
  and global existence for semilinear wave equations}, Amer. J. Math.
  \textbf{119} (1997), no.~6, 1291--1319. \MR{1481816}

\bibitem{G81}
R.~T. Glassey, \emph{Finite-time blow-up for solutions of nonlinear wave
  equations}, Math. Z. \textbf{177} (1981), no.~3, 323--340. \MR{618199}

\bibitem{IkeSob17space}
M.~Ikeda and M.~Sobajima, \emph{Life-span of blowup solutions to semilinear
  wave equation with space-dependent critical damping}, arXiv:1709.04401
  (2017).

\bibitem{IkeSob17time}
\bysame, \emph{Life-span of solutions to semilinear wave equation with
  time-dependent critical damping for specially localized initial data},
  Mathematische Annalen (2018), 1--24.

\bibitem{Joh79}
F.~John, \emph{Blow-up of solutions of nonlinear wave equations in three space
  dimensions}, Manuscripta Math. \textbf{28} (1979), 235--268.

\bibitem{KatSak18}
M.~Kato and M.~Sakuraba, \emph{Global existence and blow-up for semilinear
  damped wave equations in three space dimensions}, arXiv:1807.04327 (2018).

\bibitem{KubOht05}
H.~Kubo and M.~Ohta, \emph{On the global behavior of classical solutions to
  coupled systems of semilinear wave equations}, Operator Theory Adv. and
  Appl., vol. 159, Birkhäuser Verlag, 2005.

\bibitem{Lai18}
N.~A. Lai, \emph{Weighted {$L^2$}-{$L^2$} estimate for wave equation and its
  applications}, arXiv:1807.05109 (2018).

\bibitem{Matsu77}
A.~Matsumura, \emph{Energy decay of solutions of dissipative wave equations},
  Proc. Japan Acad., Ser. A \textbf{53} (1977), 232--236.

\bibitem{Mochi76}
K.~Mochizuki, \emph{Scattering theory for wave equations with dissipative
  terms}, Publ. Res. Inst. Math. Sci. \textbf{12} (1976), 383--390.

\bibitem{ST97}
W.~Strauss and K.~Tsutaya, \emph{Existence and blow up of small amplitude
  nonlinear waves with a negative potential}, Discrete Contin. Dynam. Systems
  \textbf{3} (1997), no.~2, 175--188. \MR{1432072}

\bibitem{Str89}
W.~A. Strauss, \emph{Nonlinear wave equations}, CBMS Regional Conference Series
  in Mathematics, vol.~73, American Math. Soc., Providence, RI, 1989.

\bibitem{Tak94}
H.~Takamura, \emph{An elementary proof of the exponential blow-up for
  semi-linear wave equations}, Mathematical {M}ethods in the {A}pplied
  {S}ciences \textbf{17} (1994), 239--249.

\bibitem{YZ06}
Borislav~T. Yordanov and Qi~S. Zhang, \emph{Finite time blow up for critical
  wave equations in high dimensions}, J. Funct. Anal. \textbf{231} (2006),
  no.~2, 361--374. \MR{2195336}

\end{thebibliography}

\end{document}